\numberwithin{equation}{section}
\newcommand{\Z}{\mathbb{Z}}
\newcommand{\C}{\mathbb{C}}
\newcommand{\N}{\mathbb{N}}
\newcommand{\R}{\mathbb{R}}
\newcommand{\Hc}{\mathcal{H}}
\newcommand{\etaq}[2]{\eta^{#2}( #1 z)}
\newcommand{\floor}[1]{\left\lfloor #1 \right\rfloor}
\newcommand{\ceiling}[1]{\left\lceil #1 \right\rceil}
\DeclareMathOperator{\SL}{SL}
\DeclareMathOperator{\sgn}{sgn}
\newtheorem{theorem}{Theorem}[section]
\newtheorem*{theorem*}{Theorem}
\newtheorem{coro}[theorem]{Corollary}
\newtheorem{defn}[theorem]{Definition}
\newtheorem{example}[theorem]{Example}
\newtheorem{lemma}[theorem]{Lemma}
\title{Counting Eta-Quotients of Prime Level}
\author{Allison Arnold-Roksandich, Kevin James, and Rodney Keaton}
\address{Department of Mathematics, Oregon State University,
Corvallis, OR 97331-4605, USA}
\email{arnoldra@math.oregonstate.edu}
\address{Department of Mathematics, Clemson University, Clemson, SC 29634}
\email{kevja@clemson.edu}
\address{Department of Mathematics and Statistics, East Tennessee State University, Johnson City, TN 37614-1710,USA}
\email{keatonr@etsu.edu}
\date{\today}
\begin{document}
\subjclass[2010]{11F20, 11F11, 11F37}
\keywords{Dedekind eta-function, eta-quotients, modular forms}
\thanks{This study was supported by the NSF sponsored REU grant DMS-1156761}
\maketitle
\begin{abstract}
It is known that all modular forms on $\SL_2(Z)$ can be expressed as a rational function in $\etaq{}{}$, $\etaq{2}{}$ and $\etaq{4}{}$. By using a theorem found in \cite{Ono1}, and calculating the order of vanishing, we can compute the eta-quotients for a given level. Using this count, knowing how many eta-quotients are linearly independent and using the dimension formula, we can figure out a subspace spanned by the eta-quotients. In this paper, we primarily focus on the case where $N=p$ a prime.
\end{abstract}
\section{Introduction and Statement of Results}
Modular forms and cusp forms encode important arithmetic information, and are therefore important to study. An easy way to accomplish this is to study the Dedekind eta-function \begin{equation}\label{etadef}
\eta(z) := q^{1/24}\prod_{n\geq 1}(1-q^n),\quad\text{where }q=e^{2\pi iz}.
\end{equation} In particular, we focus on functions of the form \begin{equation}\label{etaqdef}f(z) = \prod_{d|N} \eta^{r_d}(d z),\quad r_d\in\Z\end{equation} which we call eta-quotients, as they provide nice examples of modular forms.

The following theorem is the primary motivation behind this paper.
\begin{theorem}\cite[Thm. 1.67]{Ono1} 
Every modular form on $\SL_2(\Z)$ may be expressed as a rational function in $\eta(z)$, $\eta(2z)$, and $\eta(4z)$.
\end{theorem}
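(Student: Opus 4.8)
The plan is to reduce the statement to standard structural facts about graded rings of modular forms, passing through level $4$. First, recall the structure theorem: the graded ring $\bigoplus_k M_k(\SL_2(\Z))$ is the polynomial ring $\C[E_4,E_6]$, where $E_4$ and $E_6$ are the normalized Eisenstein series of weights $4$ and $6$ (see, e.g., Serre's \emph{A Course in Arithmetic}). Hence every modular form on $\SL_2(\Z)$ is a polynomial in $E_4$ and $E_6$, and it is enough to express $E_4$ and $E_6$ as rational functions in $\eta(z)$, $\eta(2z)$, and $\eta(4z)$.

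For this I would use the level-$4$ theta constants $\Theta_2(z)=\sum_{n\in\Z}q^{(n+1/2)^2}$, $\Theta_3(z)=\sum_{n\in\Z}q^{n^2}$, and $\Theta_4(z)=\sum_{n\in\Z}(-1)^nq^{n^2}$ (with $q=e^{2\pi i z}$), whose fourth powers lie in $M_2(\Gamma_0(4))$ and satisfy $\Theta_3^4=\Theta_2^4+\Theta_4^4$. Two inputs are needed. First, the Jacobi triple product gives the eta-quotient formulas
\[
\Theta_2(z)=\frac{2\,\eta(4z)^2}{\eta(2z)},\qquad \Theta_4(z)=\frac{\eta(z)^2}{\eta(2z)},
\]
so any rational function of $\Theta_2,\Theta_4$ is a rational function of $\eta(z),\eta(2z),\eta(4z)$. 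Second, $M_\ast(\Gamma_0(4))$ equals the polynomial ring $\C[\Theta_2^4,\Theta_4^4]$: the two generators are algebraically independent elements of $M_2(\Gamma_0(4))$, and since $\dim M_k(\Gamma_0(4))=\tfrac k2+1$ for even $k\ge 0$ agrees with the Hilbert function of $\C[\Theta_2^4,\Theta_4^4]$, the inclusion $\C[\Theta_2^4,\Theta_4^4]\subseteq M_\ast(\Gamma_0(4))$ is an equality.

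Granting these, since $E_4,E_6\in M_\ast(\SL_2(\Z))\subseteq M_\ast(\Gamma_0(4))=\C[\Theta_2^4,\Theta_4^4]$, each of $E_4,E_6$ is a polynomial in $\Theta_2^4$ and $\Theta_4^4$; the coefficients are determined by matching finitely many Fourier coefficients, giving, e.g., $E_4=16\Theta_2^8+16\Theta_2^4\Theta_4^4+\Theta_4^8$ and an analogous cubic expression for $E_6$. Substituting the formulas for $\Theta_2,\Theta_4$ then exhibits $E_4$ and $E_6$ — hence, by the first step, every modular form on $\SL_2(\Z)$ — as a rational function in $\eta(z),\eta(2z),\eta(4z)$.

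I expect the main work to be in establishing $M_\ast(\Gamma_0(4))=\C[\Theta_2^4,\Theta_4^4]$: one must compute $\dim M_k(\Gamma_0(4))$ from the valence formula (three cusps, genus $0$, no elliptic points) and check carefully that $\Theta_2^4$ and $\Theta_4^4$ are holomorphic with trivial character at all three cusps and are algebraically independent over $\C$. I would also remark, to justify that the third eta-factor is truly necessary, that from $E_6^2=E_4^3-1728\,\eta(z)^{24}$ together with the identity $E_4=\eta(z)^{16}\eta(2z)^{-8}+256\,\eta(2z)^{16}\eta(z)^{-8}$ one sees that $E_6^2$ is already a rational function of $\eta(z)$ and $\eta(2z)$; but, writing $w=\eta(2z)/\eta(z)$, the relevant radicand is not a perfect square in $\C(w)$, so $E_6$ itself is not a rational function of $\eta(z)$ and $\eta(2z)$ alone, and adjoining $\eta(4z)$ is exactly what supplies the missing square root.
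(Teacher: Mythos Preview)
The paper does not supply a proof of this theorem at all: it is quoted verbatim from Ono's monograph (as \cite[Thm.~1.67]{Ono1}) purely as motivation for the counting problems that follow, and the authors never return to it. So there is no ``paper's own proof'' to compare your attempt against.

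That said, your argument is a correct and self-contained proof. Reducing to the generators $E_4,E_6$ of $M_\ast(\SL_2(\Z))$, then passing to level~$4$ and using the structure theorem $M_\ast(\Gamma_0(4))=\C[\Theta_2^4,\Theta_4^4]$ together with the Jacobi triple-product identities expressing $\Theta_2,\Theta_4$ as eta-quotients in $\eta(z),\eta(2z),\eta(4z)$, is exactly the standard route (and is in fact the line Ono himself takes). Your explicit formula $E_4=16\Theta_2^8+16\Theta_2^4\Theta_4^4+\Theta_4^8$ checks out against the $q$-expansion, and you have correctly identified where the real work lies, namely in verifying that $\Theta_2^4,\Theta_4^4$ are holomorphic of trivial character on $\Gamma_0(4)$, are algebraically independent, and that $\dim M_k(\Gamma_0(4))=\tfrac{k}{2}+1$ for even $k\ge 0$. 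The closing remark on why $\eta(4z)$ cannot be dropped is a nice addendum, though the assertion that the radicand is not a square in $\C(w)$ would need a short explicit check (e.g.\ by degree or by examining the discriminant) if you want that paragraph to stand as more than a heuristic.
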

While the recent work of Rouse and Webb (\cite{Rou}) has shown that Theorem 1.1 does not generalize to all levels, the subspace of eta-quotients for fixed level at least 2 is still interesting. The goal of this paper is to look at the vector space of modular forms with prime level, $M_k(\Gamma_1(p))$, and count the number of eta-quotients for fixed weight, $k$, and level $p$, and compare the span of these eta-quotients with $M_k(\Gamma_1(p))$. In other words, this paper focuses on explicitly counting the eta-quotients that are modular forms for the congruence subgroups $\Gamma_0(p)$ and $\Gamma_1(p)$ where $p$ is a prime.

\begin{theorem}\label{kcond}
Let $p>3$ be a prime and $k$ be an integer. Then there exists $f(z) = \eta^{r_1}(z)\eta^{r_p}(pz)$ such that $f(z)$ is a weakly holomorphic modular form with weight $k$ of level $\Gamma_1(p)$ if and only if $k$ is divisible by $h = \frac{1}{2}\gcd(p-1,24)$. 
\end{theorem}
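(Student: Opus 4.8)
The plan is to convert modularity of $f$ into arithmetic on the pair $(r_1,r_p)$ by means of the eta-quotient criterion of \cite{Ono1}, and then to solve the resulting congruences. For level $N=p$ that criterion asserts that $\etaq{}{r_1}\etaq{p}{r_p}$ is a weakly holomorphic modular form of weight $k$ on $\Gamma_1(p)$ if and only if
\begin{equation*}
r_1+r_p=2k,\qquad r_1+p\,r_p\equiv 0\pmod{24},\qquad p\,r_1+r_p\equiv 0\pmod{24}.
\end{equation*}
Here the nebentypus produced by the criterion is harmless, since $\Gamma_1(p)$ admits an arbitrary character; the orders of $f$ at the cusps need not be nonnegative because $f$ is only required to be weakly holomorphic; and necessity of the two congruences is checked directly, the first from the transformation of $f$ under $z\mapsto z+1$ and the second from meromorphy of $f$ at the cusp $0$. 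Thus the theorem reduces to the statement that the displayed system has an integer solution $(r_1,r_p)$ exactly when $h\mid k$.

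Next I would substitute $r_p=2k-r_1$, which recasts the two congruences as $(p-1)r_1\equiv 2pk\pmod{24}$ and $(p-1)r_1\equiv -2k\pmod{24}$. Some $r_1$ satisfies both if and only if (i) the two right-hand sides agree modulo $24$, i.e., $2k(p+1)\equiv 0\pmod{24}$, equivalently $12\mid k(p+1)$; and (ii) the resulting single congruence $(p-1)r_1\equiv -2k\pmod{24}$ is solvable, i.e., $\gcd(p-1,24)\mid 2k$. Since $p>3$ is odd, $\gcd(p-1,24)$ is even and equals $2h$, so (ii) is precisely $h\mid k$. In particular the existence of $f$ forces $h\mid k$, which is the ``only if'' direction.

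For the ``if'' direction it remains to show that $h\mid k$ by itself forces $12\mid k(p+1)$; this is the single point at which the hypotheses that $p$ is prime and $p>3$ are genuinely used. I would verify it at the primes $2$ and $3$ in turn. If $p\equiv 2\pmod 3$ then $3\mid p+1$; if $p\equiv 1\pmod 3$ then $3\mid\gcd(p-1,24)=2h$, so $3\mid h\mid k$; and $3\mid p$ cannot occur. If $p\equiv 3\pmod 4$ then $4\mid p+1$; if $p\equiv 1\pmod 4$ then $4\mid\gcd(p-1,24)=2h$, so $2\mid h\mid k$, and since $p$ is odd also $2\mid p+1$, whence $4\mid k(p+1)$. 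Therefore $3\mid k(p+1)$ and $4\mid k(p+1)$ in every case, so $12\mid k(p+1)$; then any $r_1$ with $(p-1)r_1\equiv -2k\pmod{24}$, together with $r_p=2k-r_1$, yields the required eta-quotient.

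The main obstacle is exactly this last implication $h\mid k\Rightarrow 12\mid k(p+1)$: it has to be organized with care over the residue classes of $p$ modulo $8$ and modulo $3$ so that no case is missed, and it is precisely here that primality of $p$ is indispensable, since for a composite level the analogous implication can fail. Everything before it --- rewriting modularity as the three displayed conditions and collapsing them to one linear congruence in $r_1$ --- is routine once the criterion of \cite{Ono1} is in hand.
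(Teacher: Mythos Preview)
Your argument is correct and follows the same overall strategy as the paper: translate weak modularity of $\eta^{r_1}(z)\eta^{r_p}(pz)$ into the three conditions $r_1+r_p=2k$, $r_1+pr_p\equiv 0\pmod{24}$, $pr_1+r_p\equiv 0\pmod{24}$, eliminate $r_p$, and analyze the resulting linear congruence in $r_1$.

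The organization differs in two places. For the forward direction, the paper works with the order-of-vanishing equations and argues via a divisor $d$ of $k$ and $\delta=\gcd(24,2dp,p-1)$, which forces it to treat $p\equiv 1$ and $p\equiv 17\pmod{24}$ as separate cases; your version gets $h\mid k$ in one line from the solvability criterion $\gcd(p-1,24)\mid 2k$, which is cleaner. For the backward direction, the paper only explicitly solves one of the two congruences (the other follows because $p^2\equiv 1\pmod{24}$, a fact proved later in Lemma~\ref{funlem} but not invoked here), whereas you first prove $h\mid k\Rightarrow 12\mid k(p+1)$ by a short case split modulo $3$ and modulo $4$, so that the two congruences collapse to one before you solve it. Both routes work; yours is more self-contained, while the paper's is shorter once one is willing to use $p^2\equiv 1\pmod{24}$ --- indeed, that identity would also let you bypass your case analysis, since multiplying $(p-1)r_1\equiv -2k\pmod{24}$ by $p$ immediately yields $(p-1)r_1\equiv 2pk\pmod{24}$.
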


This first theorem provides a condition on $k$ that is necessary and sufficient for showing that the space of weakly holomorphic weight $k$ and level $\Gamma_0(p)$ contains eta-quotients. With some effort, we can create similar conditions to guarantee when $f(z)$ is in $M_k(\Gamma_0(p))$. The next theorem gives an explicit count of the number of eta-quotients that are cusp forms of weight $k$ and level $\Gamma_0(p)$.

\begin{theorem}\label{etaPcount}
Let $p>3$ be a prime. Let $k=hk'$ where $h$ is the needed divisor of $k$ given by Theorem \ref{kcond}. Let $c$ be the smallest positive integer representative of $\frac{k'h}{12}$ modulo $\frac{p-1}{2h}$.
\begin{enumerate}
\item For $c = \frac{k(p+1)}{12} - \floor{\frac{k(p+1)}{12\frac{p-1}{2h}}} \frac{p-1}{2h}$, the number of eta-quotients in $S_k(\Gamma_1(p))$ is \[\frac{k(p+1)}{12d}-1.\]
\item For $c < \frac{k(p+1)}{12} - \floor{\frac{k(p+1)}{12\frac{p-1}{2h}}} \frac{p-1}{2h}$, the number of eta-quotients in $S_k(\Gamma_1(p))$ is \[ \ceiling{\frac{k(p+1)}{12\frac{p-1}{2h}}}.\]
\item For $c > \frac{k(p+1)}{12} - \floor{\frac{k(p+1)}{12\frac{p-1}{2h}}} \frac{p-1}{2h}$, the number of eta-quotients in $S_k(\Gamma_1(p))$ is \[ \floor{\frac{k(p+1)}{12\frac{p-1}{2h}}}.\]
\end{enumerate}
\end{theorem}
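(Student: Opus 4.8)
The plan is to recast ``eta-quotient in $S_k(\Gamma_1(p))$'' as a lattice-point count: the number of integers in an interval that lie in a single residue class. Since $p$ is prime, an eta-quotient of level $p$ is $f(z)=\eta^{r_1}(z)\eta^{r_p}(pz)$, and $\Gamma_0(p)$ has exactly the two cusps $0$ and $\infty$. By the classical criteria for eta-quotients (see \cite{Ono1}), which already underlie Theorem~\ref{kcond}, such an $f$ lies in $M_k(\Gamma_1(p))$ precisely when $r_1+r_p=2k$, $24\mid r_1+pr_p$, $24\mid pr_1+r_p$, and the orders of vanishing $x:=\tfrac{r_1+pr_p}{24}$ at $\infty$ and $y:=\tfrac{pr_1+r_p}{24}$ at $0$ satisfy $x\ge 0$ and $y\ge 0$; it is a cusp form exactly when in addition $x\ge 1$ and $y\ge 1$. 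As in the proof of Theorem~\ref{kcond}, once $r_1+r_p=2k$ with $h\mid k$, a short computation modulo $24$ (a case split on $p\bmod 24$) shows that the two divisibility conditions collapse to a single congruence on $r_p$ and that $12\mid k(p+1)$; hence $M:=\tfrac{k(p+1)}{12}=x+y$ is an integer.

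Next I would pass to the single variable $x$ (distinct eta-quotients have distinct $x$, since $r_1+r_p$ is fixed). Clearing denominators in $x=\tfrac{r_1+pr_p}{24}$ and using $r_1=2k-r_p$ gives the identity $k' = \tfrac{12}{h}\,x-\tfrac{p-1}{2h}\,r_p$. Set $n:=\tfrac{p-1}{2h}$; since $h\mid 12$ and $\gcd\!\big(n,\tfrac{12}{h}\big)=1$, reducing this identity modulo $n$ yields $x\equiv c\pmod n$, where $c$ is the residue of $\tfrac{k'h}{12}$ modulo $n$ from the statement. The symmetric computation (together with $p\equiv 1\pmod n$) gives $y\equiv c\pmod n$, whence $M=x+y\equiv 2c\pmod n$. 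Conversely, for any integers $x\ge 1$ and $y=M-x\ge 1$ with $x\equiv c\pmod n$, the quantity $r_p=\tfrac{24x-2k}{p-1}$ is an integer and $f=\eta^{2k-r_p}(z)\eta^{r_p}(pz)$ is a cusp form in $S_k(\Gamma_1(p))$ with order of vanishing $x$ at $\infty$. Thus the number we want is $\#\{\,x\in\Z:\ 1\le x\le M-1,\ x\equiv c\pmod n\,\}$.

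Finally I would evaluate this count and split it into cases. It equals $\floor{\tfrac{M-1-c}{n}}-\ceiling{\tfrac{1-c}{n}}+1$, and the identity $M\equiv 2c\pmod n$ lets one resolve the floor and ceiling explicitly: the answer depends only on where $c$ sits inside $[0,n)$ relative to the remainder $\rho:=M-\floor{M/n}\,n$ (which satisfies $\rho\equiv 2c\pmod n$). If $c=0$ — equivalently $c=\rho$, which forces $n\mid M$ — the count is $\tfrac{M}{n}-1$; if $0<2c<n$, equivalently $c<\rho$, it is $\ceiling{M/n}$; and if $2c\ge n$, equivalently $c>\rho$, it is $\floor{M/n}$. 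Substituting $M/n=\tfrac{k(p+1)}{12\,(p-1)/(2h)}$ and $\rho=\tfrac{k(p+1)}{12}-\floor{\tfrac{k(p+1)}{12\,(p-1)/(2h)}}\tfrac{p-1}{2h}$ reproduces the three displayed formulas (the symbol $d$ in part~(1) being $\tfrac{p-1}{2h}$).

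The hard part is the boundary bookkeeping in the last step: the count is sensitive to whether the progression $x\equiv c\pmod n$ meets the endpoints $x=1$ and $x=M-1$, and the identity $M\equiv 2c\pmod n$ is exactly what organizes this into the stated trichotomy. Deriving that identity — together with the routine but fiddly check that $24\mid pr_1+r_p$ is automatic once $r_1+r_p=2k$ and $h\mid k$ — is where essentially all of the work lies; the lattice-point count is then mechanical.
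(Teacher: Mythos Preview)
Your proposal is correct and follows essentially the same approach as the paper: both reduce the count to the number of integers $v_1$ in the open interval $(0,M)$, $M=\tfrac{k(p+1)}{12}$, lying in the single residue class $c\pmod{n}$ with $n=\tfrac{p-1}{2h}$, and then split according to how $c$ compares with the remainder $\rho=M-\lfloor M/n\rfloor n$. Your extra observation that $M\equiv 2c\pmod n$ (from the symmetric congruence $y\equiv c\pmod n$) organizes the trichotomy more cleanly than the paper's brief justification of Cases~2 and~3, but the argument is otherwise the same.
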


There are also eta-quotients in $M_k(\Gamma_1(p))$ that are not cusp forms that are given by the following theorem.

\begin{theorem}\label{noncuspCount}
Let $p>3$ be a prime. Then, $M_k(\Gamma_1(p))\setminus S_k(\Gamma_1(p))$ contains at least one eta-quotient if and only if $\frac{p-1}{2} | k$. Furthermore, for $k>0$ and $\frac{p-1}{2}|k$, there are exactly two eta-quotients in $M_k(\Gamma_1(p))\setminus S_k(\Gamma_1(p))$, which are of the form \[ \frac{\etaq{p}{\frac{2pk}{p-1}}}{\etaq{}{\frac{2k}{p-1}}},\quad\text{and}\quad \frac{\etaq{}{\frac{2pk}{p-1}}}{\etaq{p}{\frac{2k}{p-1}}}.\]
\end{theorem}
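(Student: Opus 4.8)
\emph{Proof proposal.} The plan is to reduce everything to the pair of integers $r_1,r_p$ appearing in $f(z)=\eta^{r_1}(z)\eta^{r_p}(pz)$ and to the two cusps of $\Gamma_0(p)$. Recall from the proofs of Theorems~\ref{kcond} and~\ref{etaPcount} that such an $f$ has weight $k=\tfrac12(r_1+r_p)$, that its orders of vanishing at the cusps $\infty$ and $0$ are $\tfrac1{24}(r_1+p\,r_p)$ and $\tfrac1{24}(p\,r_1+r_p)$, and that --- since $p$ is prime these are the only cusps of $\Gamma_0(p)$, and $f$ is modular on $\Gamma_0(p)$ with a character --- one has $f\in M_k(\Gamma_1(p))$ exactly when $24\mid r_1+p\,r_p$, $24\mid p\,r_1+r_p$, $r_1+p\,r_p\ge0$ and $p\,r_1+r_p\ge0$, and $f\in S_k(\Gamma_1(p))$ exactly when moreover both of these quantities are strictly positive. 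Hence $f\in M_k(\Gamma_1(p))\setminus S_k(\Gamma_1(p))$ precisely when the two congruences and the two inequalities hold and at least one of $r_1+p\,r_p$, $p\,r_1+r_p$ equals $0$. (We assume $k>0$ throughout; for $k=0$ the constant $1$ is the only eta-quotient in $M_0(\Gamma_1(p))$.)

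I would then solve the two cases. If $r_1+p\,r_p=0$, then together with $r_1+r_p=2k$ this forces $r_p=\tfrac{-2k}{p-1}$ and $r_1=\tfrac{2pk}{p-1}$; if instead $p\,r_1+r_p=0$, it forces $r_1=\tfrac{-2k}{p-1}$ and $r_p=\tfrac{2pk}{p-1}$. In either case the exponents are integers if and only if $(p-1)\mid 2k$, i.e. (as $p-1$ is even) if and only if $\tfrac{p-1}{2}\mid k$; this gives the "only if" direction. For the converse, assume $\tfrac{p-1}{2}\mid k$ and $k>0$. Then both tuples above are integral, exactly one of $r_1+p\,r_p$, $p\,r_1+r_p$ vanishes, and the other equals $2k(p+1)>0$, so the only remaining requirement is the congruence $24\mid 2k(p+1)$. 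This is where the hypothesis $p>3$ is used: since $\gcd(p,6)=1$ we have $p^2\equiv1\pmod{24}$, so writing $k=\tfrac{p-1}{2}m$ gives $2k(p+1)=(p^2-1)m\equiv0\pmod{24}$. Therefore each of the two tuples yields a genuine form in $M_k(\Gamma_1(p))$ having order $0$ at exactly one cusp, hence not a cusp form; rewriting $\eta^{r_1}(z)\eta^{r_p}(pz)$ gives $\etaq{}{\frac{2pk}{p-1}}/\etaq{p}{\frac{2k}{p-1}}$ in the first case and $\etaq{p}{\frac{2pk}{p-1}}/\etaq{}{\frac{2k}{p-1}}$ in the second, which are exactly the two eta-quotients in the statement.

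To finish, observe that any eta-quotient in $M_k(\Gamma_1(p))\setminus S_k(\Gamma_1(p))$ must have order $0$ at $\infty$ or at $0$, hence falls into one of the two cases, each of which determines $(r_1,r_p)$ uniquely; so there are at most two. For $k>0$ they are distinct, since $r_1+p\,r_p=0$ and $p\,r_1+r_p=0$ holding simultaneously would force $(p+1)(r_1+r_p)=0$ and hence $k=0$. I expect the only mildly delicate point to be the cusp bookkeeping --- checking that for a prime-level eta-quotient, membership in $M_k(\Gamma_1(p))$ and in $S_k(\Gamma_1(p))$ is governed by the two cusps $\infty$ and $0$ alone --- but this is already available from the earlier theorems, so what remains is essentially a $2\times2$ linear solve together with the congruence $p^2\equiv1\pmod{24}$.
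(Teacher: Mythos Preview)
Your proof is correct and follows essentially the same approach as the paper: both arguments set one of the two cusp orders to zero, solve the resulting $2\times 2$ linear system in $(r_1,r_p)$ to obtain the divisibility condition $\tfrac{p-1}{2}\mid k$, and then verify the $24$-divisibility requirements via $p^2\equiv 1\pmod{24}$. Your version is slightly tidier in that you handle the converse by computing the nonzero order as $2k(p+1)=(p^2-1)m$ once rather than checking each congruence separately, and you make the distinctness of the two forms explicit, but the substance is the same.
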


Finally, the following theorem also tells us the size of the subspace spanned by eta-quotients.

\begin{theorem}\label{plinind}
Let $p>3$ be a prime. Then, the eta-quotients in $M_k(\Gamma_1(p))$ given by the previous theorems are linearly independent.
\end{theorem}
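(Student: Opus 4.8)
The plan is to exploit the fact that a prime level $p$ has exactly two divisors, so every eta-quotient appearing in Theorems \ref{etaPcount} and \ref{noncuspCount} has the shape $f(z) = \eta^{r_1}(z)\eta^{r_p}(pz)$, and the weight-$k$ normalization forces $r_1 + r_p = 2k$. Thus the entire collection is a one-parameter family, indexed by $r_p$ (equivalently by $r_1 = 2k - r_p$). First I would record that distinct parameter values give genuinely distinct $q$-series: from a hypothetical identity $\prod_{n\ge1}(1-q^n)^{r_1-s_1}\prod_{n\ge1}(1-q^{pn})^{r_p-s_p}=1$, taking $-\log$ and comparing the coefficient of $q^1$ (which only the first product can contribute, since $p>1$) forces $r_1=s_1$, and then $r_p=s_p$; in particular the counts in the previous theorems are counts of pairwise distinct eta-quotients.

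Next I would compute the order of vanishing at the cusp $\infty$. From \eqref{etadef}, $f(z) = q^{(r_1 + p r_p)/24}\prod_{n\ge1}(1-q^n)^{r_1}\prod_{n\ge1}(1-q^{pn})^{r_p}$, where both infinite products have constant term $1$; hence $\operatorname{ord}_\infty f = \tfrac{r_1 + p r_p}{24}$ and the leading $q$-coefficient of $f$ is $1$. Using $r_1 + r_p = 2k$ this equals $\tfrac{2k + (p-1)r_p}{24}$, a strictly increasing affine function of $r_p$. Therefore the (pairwise distinct, hence having pairwise distinct $r_p$) eta-quotients produced by Theorems \ref{etaPcount} and \ref{noncuspCount} have pairwise distinct orders of vanishing at $\infty$.

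Linear independence is then a formal consequence of this triangular structure with respect to $q$-expansions. Suppose $f_1,\dots,f_n$ are these eta-quotients, ordered so that $\operatorname{ord}_\infty f_1 < \cdots < \operatorname{ord}_\infty f_n$, and suppose $\sum_i c_i f_i \equiv 0$. Writing $a_i = r_1^{(i)} + p\, r_p^{(i)} \in \Z$, the exponents occurring in the $q$-expansion of $f_i$ all lie in $\{\,a_i/24 + m : m \in \Z_{\ge 0}\,\}$, so the smallest exponent occurring anywhere in the sum is $a_1/24$ and it occurs only in $f_1$, with coefficient $c_1$ times the nonzero leading coefficient of $f_1$; hence $c_1 = 0$. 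Iterating gives $c_i = 0$ for all $i$.

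I do not expect a serious obstacle once Theorems \ref{etaPcount}--\ref{noncuspCount} are in hand: the only things requiring care are (i) checking that the two non-cuspidal eta-quotients of Theorem \ref{noncuspCount} also satisfy $r_1 + r_p = 2k$ — a one-line computation, $\tfrac{2pk}{p-1} - \tfrac{2k}{p-1} = 2k$ — so that the single ``order at $\infty$'' argument applies uniformly to the whole list, and (ii) the mild bookkeeping of possibly fractional $q$-exponents in the echelon argument above. The genuinely substantive input was already spent in establishing the earlier theorems; here the content is essentially the observation that a prime level leaves only one free exponent, which rigidifies the family enough to make linear independence automatic.
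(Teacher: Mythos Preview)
Your argument is correct and is essentially the paper's own: both order the eta-quotients by their order of vanishing at the cusp $\infty$, observe these orders are pairwise distinct, and read off linear independence from the resulting echelon/triangular structure of the $q$-expansions. Your write-up is in fact cleaner on two points: you actually \emph{prove} the orders are distinct (via the affine dependence $\operatorname{ord}_\infty f=\tfrac{2k+(p-1)r_p}{24}$ on the single free parameter), whereas the paper merely asserts it; and you avoid the paper's superfluous appeal to the Sturm bound, which plays no role once one knows the leading $q$-exponents already separate the forms.
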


Section 2 of this paper provides the necessary background for the results. The background includes information on modular forms, the dimension formula, and eta-quotients. Section 3 provides the proofs of the results given in this section. Finally, Section 4 details still open questions and some ideas of how to extend these results further.

\section{Background}
\subsection{Modular Forms}\label{ModForm}
In this section we present some definitions and basic facts from the theory of modular forms. For further details, the interested reader is referred to \cite[Ch. 3]{Kob1}.

\begin{defn}The \emph{modular group}, denoted $\SL_2(\Z )$, is the group of all matrices of determinant 1 and having integral entries.\end{defn} 

The modular group acts on the upper half plane, $\Hc = \{ x+iy | x,y\in \R, y>0\}$, by linear fractional transformations \[\begin{pmatrix}a & b\\c&d\end{pmatrix}z = \frac{az+b}{cz+d}.\]
Furthermore, if we define $\mathcal{H}^{*}$ to be the set $\mathcal{H}\cup\mathbb{Q}\cup\{i\infty\}$, then the action of $\SL_2(\mathbb{Z})$ on $\mathcal{H}$ extends to an action on $\mathcal{H}^{*}$ \cite{Kob1}. 

There are only certain specific subgroups of $\SL_2(\Z)$ which we will use for our purposes. They are 
\begin{align*}
\Gamma_0 (N) &= \left\lbrace \begin{pmatrix}a & b\\c&d\end{pmatrix} \in \SL_2(\Z)\left| \begin{pmatrix}a & b\\c&d\end{pmatrix} \equiv \begin{pmatrix}* & *\\0&*\end{pmatrix}\right. \pmod{N} \right\rbrace,\\
\Gamma_1 (N) &= \left\lbrace \begin{pmatrix}a & b\\c&d\end{pmatrix} \in \SL_2(\Z)\left| \begin{pmatrix}a & b\\c&d\end{pmatrix} \equiv \begin{pmatrix}1 & *\\0&1\end{pmatrix}\right. \pmod{N} \right\rbrace .\\
\end{align*}
Each of these subgroups is called a congruence subgroup of level $N$. Note, if $N=1$, then $\Gamma_0(N)=\Gamma_1(N)=\SL_2(\mathbb{Z})$. This brings us to our next definition. 
\begin{defn}
Let $\Gamma\leq\SL_2(\mathbb{Z})$ be a congruence subgroup and define an equivalence relation on $\mathbb{Q}\cup\{\infty\}$ by $z_1\sim z_2$ if there is a $\gamma\in\Gamma$ such that $\gamma\cdot z_1=z_2$. We call each equivalence class under this relation a \emph{cusp of} $\Gamma$.
\end{defn}
Now, for an integer $k$ and a function $f:\Hc^{*}\rightarrow\mathbb{C}$ and a $\gamma=\begin{pmatrix}a&b\\c&d\end{pmatrix}\in\SL_2(\mathbb{Z})$ we define the weight $k$ slash operator by
$$f|_k\gamma(z)=(cz+d)^{-k}f(\gamma\cdot z).$$
Note, we will often suppress the weight from the notation when it is clear from context, or irrelevant for our purposes.

We are now prepared to define the objects which will be of primary interest to us.
\begin{defn} A function $f: \Hc^{*} \rightarrow \C$ is called a \emph{weakly modular function} of weight $k$ and level $\Gamma$ if 
\begin{enumerate}
\item $f$ is holomorphic on $\Hc$,
\item $f$ is modular, i.e., for every $\gamma \in \Gamma$ and $z\in\Hc$ we have $f|\gamma(z)=f(z)$,
\item $f$ is meromorphic at each cusp of $\Gamma$.
\end{enumerate}
Furthermore, if we replace condition 3 by $f$ is holomorphic at each cusp of $\Gamma$ then we call $f$ a modular form. If we further replace condition 3 with $f$ vanishes at each cusp of $\Gamma$ then we call $f$ a cusp form.
\end{defn}

Consider a form, $f$, of level $N$. We will clarify what we mean by a function being ``holomorphic at a cusp''. First, consider the cusp $\{i\infty\}$, which we call ``the cusp at $\infty$''. Note, the matrix \[T=\begin{pmatrix}1&1\\0&1\end{pmatrix}\] is an element of $\Gamma_1(N)$ and hence $\Gamma_0(N)$ for every $N$, and as our function satisfies Condition 2, we have
$f(Tz)=f(z+1)=f(z)$, i.e. our function is periodic. It is a basic fact from complex analysis that such a function has a Fourier expansion of the form 
$$f(z)=\sum_{n=-\infty}^{\infty}a_nq^n\text{, where }q:=e^{2\pi iz}.$$
Using this, we say that $f$ is meromorphic at $\{i\infty\}$ if there is some $c<0$ such that $a_n=0$ for all $n<c$. We say that $f$ holomorphic at $\{i\infty\}$ if $a_n=0$ for all $n<0$, and we say that $f$ vanishes at $\{i\infty\}$ if $a_n=0$ for all $n\leq 0$. We call the smallest $n$ such that $a_n\neq 0$ the order of vanishing on the cusp at $\infty$. To cover another cusp, $\alpha$, let $\gamma\in \SL_2(\Z)$ be such that $\gamma\cdot\infty = \alpha$. Then, we need \[(cz+d)^{-k}f(\gamma\cdot z) = \sum_{n=-\infty}^{\infty} c_nq^n.\] If this holds, then we say $f$ is meromorphic at $\alpha$ if there is some $c<0$ such that $c_n=0$ for all $n<c$. We also similarly say that the smallest $n$ such that $c_n\neq 0$ is the order of vanishing at $\alpha$.

Now, we set some notation which we will use throughout. For $\Gamma\leq\SL_2(\mathbb{Z})$ we denote the space of weakly modular functions (modular forms, cusp forms, resp.) of level $\Gamma$ and weight $k$ by $M^{!}_k(\Gamma)$ ($M_k(\Gamma),S_k(\Gamma)$, resp.). Note, the spaces $S_k(\Gamma)\leq M_k(\Gamma)$ are finite dimensional complex vector spaces.

Throughout, we will also need the notion of a modular form with an associated character. We define a Dirichlet character of modulus $N$ as a map $\chi:\Z\rightarrow \C$ such that\begin{enumerate}
\item $\chi(m) = \chi(m+N)$ for all $m\in\Z$
\item If $\gcd(m,N)>1$ then $\chi(m) = 0$. If $\gcd(m,N) = 1$, then $\chi(N)\neq 0$.
\item $\chi(mn) = \chi(m)\chi(n)$ for all integers $m,n$.
\end{enumerate} Furthermore, if we let $c$ be the minimal integer such that $\chi$ factors through $\left(\mathbb{Z}/c\mathbb{Z}\right)^{\mathsf{x}}$, then we say $\chi$ has conductor $c$. 

Let $f\in M_k(\Gamma_1(N))$ and suppose further that $f$ satisfies
$$f|\gamma(z)=\chi(d)f(z)\text{, for all }\gamma=\begin{pmatrix}a&b\\c&d\end{pmatrix}\in\Gamma_0(N).$$
Then, we say that $f$ is a modular form of level $N$ and character $\chi$, and we denote the space of such functions by $M_k(N,\chi)$. Note, this is defined similarly for weakly modular functions and cusp forms.

It is well known that we have the following decomposition
$$M_k(\Gamma_1(N))=\!\!\!\!\!\bigoplus_{\chi\!\!\!\!\!\pmod{N}}\!\!\!\!\!M_k(N,\chi),$$
where our direct sum is over all Dirichlet characters modulo $N$. Decomposing further we have
$$M_k(N,\chi)=S_k(N,\chi)\oplus E_k(N,\chi)^{\perp},$$
into the space of cusp forms and its orthogonal complement with respect to the Petersson inner product, denoted $E_k(N,\chi)$, which is called the Eisenstein subspace.
\subsection{Dimension Formulas}
In this section we present formulas for the dimension of spaces of cusp and modular forms. For more details regarding dimension formulas, the interested reader is referred to \cite{Ste1}.

\subsubsection{The dimension formula for level $\Gamma_0(p)$ with trivial character}\label{Gamma0}
In this section we present a formula for the dimension of  $E_k(\Gamma_0(p))$ and $S_k(\Gamma_0(p))$ for $p\geq5$ a rational prime.

First, we set
$$\mu_{0,2}(p)=\left\{\begin{array}{lcl}0&\text{if}&p\equiv 3\pmod{4}\\2&\text{if}&p\equiv 1\pmod{4},\end{array}\right.$$
$$\mu_{0,3}(p)=\left\{\begin{array}{lcl}0&\text{if}&p\equiv 2\pmod{3}\\2&\text{if}&p\equiv 1\pmod{3}.\end{array}\right.$$
Then define
$$g_0(p)=\frac{p+1}{12}-\frac{\mu_{0,2}(p)}{4}-\frac{\mu_{0,3}(p)}{3}.$$
Using this we have $\dim S_2(\Gamma_0(p))=g_0(p)$ and $\dim E_2(\Gamma_0(p))=1$,
and for $k\geq 4$ even we have $\dim E_k(\Gamma_0(p))=2$ and
\begin{align*}
\dim S_k(\Gamma_0(p))=(k-1)(g_0(p)-1)+(k-2)+\mu_{0,2}(p)\lfloor k/4\rfloor+\mu_{0,3}(p)\lfloor k/3\rfloor.
\end{align*}
From this, we see that our formula depends on the congruence class which $k$ and $p$ lie in modulo 12, so compiling these different congruences together we have Table \ref{trivtab} in the Appendix.

\subsubsection{The dimension formula for $\Gamma_0(p)$ with quadratic character}
In this section we will consider the case that our level is $\Gamma_0(p)$ for some rational prime $p$ and that our associated character is quadratic. Note, at the end of the section we compile all of our computations together in a table for convenience.

In Section \ref{Gamma0} we considered the trivial character case, so we now set $\chi(\cdot)=\left(\frac{\cdot}{p}\right)$. We must compute the summations
$$\sum_{x\in A_4(p)}\chi(x),\sum_{x\in A_3(p)}\chi(x),$$ where $A_4(N) = \{ x\in \Z/N\Z: x^2+1=0\}$ and $A_3(p) = \{ x\in \Z/p\Z: x^2+x+1=0\}$.

First, we will consider $\sum_{x\in A_4(p)}\chi(x)$. This is clearly zero if $A_4(p)$ is empty, which occurs precisely when $p\equiv 3\pmod{4}$. Also, it is immediate that our summation equals $1$ when $p=2$.  Now suppose $p\equiv 1\pmod{4}$. Then $\# A_4(p)=2$. Note, if $r\in A_4(p)$ then $-r\in A_4(p)$ and  $\chi(r)=\chi(-r)$ since $\chi(-1)=1$. Furthermore, it is not hard to see that $\chi(r)=1$ if and only if there is an element of order 8 in $\left(\mathbb{Z}/p\mathbb{Z}\right)^{\mathsf{x}}$, i.e., $p\equiv 1\pmod{8}$. Thus, we have
$$\sum_{x\in A_4(p)}\chi(x)=\left\{\begin{array}{lcl}1&\text{if}&p=2\\0&\text{if}&p\equiv 3\pmod{4}\\2&\text{if}&p\equiv 1\pmod{8}\\-2&\text{if}&p\equiv 5\pmod{8}.\end{array}\right.$$

Now we consider the summation $\sum_{x\in A_3(p)}\chi(x)$. Similar to above we have that $A_3(p)$ is empty if $p\equiv 2\pmod{3}$, in which case our summation is zero. Also, if $p=3$ then our summation is $1$. Now, suppose that $p\equiv 1\pmod{3}$. Note, it is immediate that if $r\in A_3(p)$ then so is $r^2$. Similar to the previous situation, we have that $\chi(r)=1$ if and only if there is an element of order 6 in $\left(\mathbb{Z}/p\mathbb{Z}\right)^{\mathsf{x}}$, i.e., $p\equiv 1\pmod{6}$. Note, that as $p$ is prime, it follows that $p\equiv 1\pmod{6}$ is equivalent to $p\equiv 1\pmod{3}$. Thus, we have
$$\sum_{x\in A_3(p)}\chi(x)=\left\{\begin{array}{lcl}1&\text{if}&p=3\\0&\text{if}&p\equiv 2\pmod{3}\\2&\text{if}&p\equiv 1\pmod{3}.\end{array}\right.$$

We summarize our calculations in Table \ref{quadtab} in the Appendix.

\subsection{Eta-quotients}\label{EtaQuotient}
In this section we introduce the eta-function and present some results relating these to modular forms. For further details regarding the eta-function the interested reader is referred to \cite{Kohler}.

Recall Dedekind's eta-function given in (\ref{etadef}). The eta-function satisfies the following transformation properties with respect to our matrices $S,T$ defined in Section \ref{ModForm}
$$\eta(Sz)=\eta(-z^{-1})=\sqrt{-iz}\eta(z),\text{ }\eta(Tz)=\eta(z+1)=e^{\frac{2\pi i}{24}}\eta(z).$$
Combining these we can deduce the following general transformation formula
$$\eta(\gamma z)=\epsilon(\gamma)(cz+d)^{\frac{1}{2}}\eta(z)\text{ for all } \gamma=\begin{pmatrix}a&b\\c&d\end{pmatrix}\in\SL_2(\mathbb{Z}),$$
where 
$$\epsilon(\gamma)=\left\{\begin{array}{lcl}\left(\frac{d}{|c|}\right)e^{2\pi i\left(\frac{(a+d)c-bd(c^2-1)-3c}{24}\right)}&\text{if}&c\text{ odd}\\(-1)^{\frac{1}{4}(\sgn(c)-1)(\sgn(d)-1)}\left(\frac{d}{|c|}\right)e^{2\pi i\left(\frac{(a+d)c-bd(c^2-1)+3d-3-3cd}{24}\right)}&\text{if}&c\text{ even}\end{array}\right.$$
and $\sgn(x)=\frac{x}{|x|}$. In addition to the eta-function, we will also need to consider the related function $\eta(\delta z)$ for a positive integer $\delta$. If we set $f(z)=\eta(\delta z)$ then $f(z)$ satisfies
$$f(\gamma z)=\epsilon\left(\begin{pmatrix}a&\delta b\\c/\delta &d\end{pmatrix}\right)(cz+d)^{\frac{1}{2}}f(z)\text{, for all }\gamma=\begin{pmatrix}a&b\\c&d\end{pmatrix}\in\Gamma_0(\delta).$$
Finally, we will need the following transformation
$$f(Tz)=e^{\frac{2\pi i\delta}{24}}f(z).$$
Notice, that this function is ``almost'' a modular form. With this in mind, we consider certain products of these functions with the goal of eliminating the ``almost''. This brings us to eta-quotients, which we defined in (\ref{etaqdef}). We will be interested in when these eta-quotients are modular forms. We have the following theorem which partially answers this question.

\begin{theorem}\label{etacon}\cite[Thm. 1.64]{Ono1}
Define the an eta-quotient \[f(z) = \prod_{\delta|N} \eta^{r_\delta}(\delta z),\] and set $k = \frac{1}{2}\sum_{\delta|N} r_\delta\in\Z$. Suppose our exponents $r_1,...,r_N$ satisfy \begin{align*}
\sum_{\delta|N} \delta r_\delta &\equiv 0\pmod{24},\quad\text{and}\\
\sum_{\delta|N} \frac{N}{\delta} r_\delta &\equiv 0\pmod{24}.
\end{align*}
Then, \[f|_k\gamma(z) = \chi(d)f(z)\] for all $\gamma = \begin{pmatrix}a&b\\c&d\end{pmatrix}\in\Gamma_0(N)$, where $\chi(n) = \left(\frac{(-1)^k s}{n}\right)$ with $s = \prod_{\delta|N} \delta^{r_\delta}$.
\end{theorem}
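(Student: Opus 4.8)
The plan is to compute $f|_k\gamma$ directly from the transformation law for $\eta(\delta z)$ recorded just above the statement, and then to show that the resulting constant multiplier is exactly $\chi(d)$. Fix $\gamma=\begin{pmatrix}a&b\\c&d\end{pmatrix}\in\Gamma_0(N)$. For each $\delta\mid N$ we have $\delta\mid N\mid c$, so $\gamma\in\Gamma_0(\delta)$ and the matrix $M_\delta:=\begin{pmatrix}a&\delta b\\ c/\delta&d\end{pmatrix}$ has integer entries and determinant $ad-bc=1$. Applying the given formula to each factor yields $\eta(\delta\gamma z)=\epsilon(M_\delta)\,(cz+d)^{1/2}\eta(\delta z)$, where crucially the automorphy factor is the \emph{same} $(cz+d)^{1/2}$ for every $\delta$ (it depends only on the lower row $(c,d)$ of $\gamma$). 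Hence
\begin{align*}
f(\gamma z)=\prod_{\delta\mid N}\eta^{r_\delta}(\delta\gamma z)=\left(\prod_{\delta\mid N}\epsilon(M_\delta)^{r_\delta}\right)\bigl((cz+d)^{1/2}\bigr)^{\sum_{\delta\mid N}r_\delta}f(z).
\end{align*}
Since $\sum_{\delta\mid N}r_\delta=2k\in 2\Z$, the shared half-integral factor assembles into the genuine integer power $(cz+d)^{k}$ with no branch ambiguity, and this cancels the $(cz+d)^{-k}$ in the slash operator. The theorem is therefore reduced to the arithmetic identity
\begin{align*}
\prod_{\delta\mid N}\epsilon(M_\delta)^{r_\delta}=\chi(d),\qquad \chi(d)=\left(\frac{(-1)^k s}{d}\right),\ s=\prod_{\delta\mid N}\delta^{r_\delta}.
\end{align*}

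Next I would substitute the explicit definition of $\epsilon$ into each $\epsilon(M_\delta)$, with $(a',b',c',d')=(a,\delta b,c/\delta,d)$, and separate each factor into a Jacobi symbol $\left(\frac{d}{|c/\delta|}\right)$, an exponential phase $e^{2\pi i(\cdots)/24}$, and, when $c/\delta$ is even, the extra sign $(-1)^{\frac14(\sgn(c/\delta)-1)(\sgn d-1)}$. The part of the phase common to both the odd and even formulas, namely $\tfrac{1}{24}\bigl[(a+d)c'-b'd'(c'^2-1)\bigr]$, when summed against the weights $r_\delta$ expands---after using $N\mid c$ to clear the denominators $1/\delta$ that appear---into an integer linear combination of $\sum_{\delta\mid N}\delta r_\delta$ and $\sum_{\delta\mid N}\tfrac{N}{\delta}r_\delta$. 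The two congruence hypotheses are precisely what force each of these sums to lie in $24\Z$, so the bulk of the phase contributes $e^{2\pi i\cdot(\text{integer})}=1$ and drops out.

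What remains, and what I expect to be the main obstacle, is showing that the leftover pieces collapse to exactly $\chi(d)$. These pieces are the Jacobi symbols, the correction terms in the phase that differ between the odd case ($-3c'$) and the even case ($3d'-3-3c'd'$), the reciprocity signs incurred in flipping $\left(\frac{d}{|c/\delta|}\right)$, and the even-case sign above; one must also adopt the Kronecker-symbol conventions to make sense of $\left(\frac{\cdot}{d}\right)$ when $d$ is even. The strategy is to flip each symbol via quadratic reciprocity to $\left(\frac{c/\delta}{d}\right)$ and use $\prod_{\delta\mid N}(c/\delta)^{r_\delta}=c^{2k}/s$ together with $\left(\frac{c^{2k}}{d}\right)=\left(\frac{c}{d}\right)^{2k}=1$ (valid since $\gcd(c,d)=1$) and the fact that each symbol is $\pm1$ to handle the negative exponents $r_\delta$, thereby extracting $\left(\frac{s}{d}\right)$, while organizing all the accumulated $\pm1$ factors so that they assemble into the single factor $\left(\frac{-1}{d}\right)^{k}=\left(\frac{(-1)^k}{d}\right)$. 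The delicate point is the parity bookkeeping: the odd/even split of each $c/\delta$ changes both which phase-correction applies and the reciprocity sign, so I would first treat the case in which every $c/\delta$ is odd, then handle the mixed parities, and finally cross-check the outcome by evaluating both sides of the identity on the generator $T$ (where the condition $\sum_{\delta\mid N}\delta r_\delta\equiv0\pmod{24}$ is visibly what is needed) and on a second generator of $\Gamma_0(N)$.
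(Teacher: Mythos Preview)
The paper does not prove this statement at all: it is quoted verbatim as \cite[Thm.~1.64]{Ono1} and used as a black box, so there is no ``paper's own proof'' to compare against. Your sketch follows the standard route by which the result is established (essentially the argument due to Newman and Gordon--Hughes as presented in Ono's book), and the first half---reducing $f|_k\gamma=\bigl(\prod_\delta\epsilon(M_\delta)^{r_\delta}\bigr)f$ and then using $N\mid c$ to rewrite the common phase $(a+d)c'-b'd'({c'}^2-1)$ as an integer combination of $\sum_\delta(N/\delta)r_\delta$ and $\sum_\delta\delta r_\delta$---is carried out correctly.

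The second half, however, is a plan rather than a proof. You correctly isolate the residual ingredients (the Jacobi symbols $\left(\frac{d}{|c/\delta|}\right)$, the odd/even correction terms $-3c'$ versus $3d'-3-3c'd'$, the reciprocity signs, and the even-case sign factor) and you correctly anticipate that flipping via reciprocity and using $\prod_\delta(c/\delta)^{r_\delta}=c^{2k}/s$ should produce $\left(\frac{s}{d}\right)$. But the assertion that ``all the accumulated $\pm1$ factors assemble into $\left(\frac{-1}{d}\right)^k$'' is precisely the content of the theorem and is not verified here; the case split by parity of $c/\delta$ interacts with the parity of $d$ (and with the Kronecker extension when $d$ is even) in a way that requires a genuine computation, not just an expectation. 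As written this is a sound outline with the key technical step still to be executed.
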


This theorem provides conditions on when an eta-quotient is a weakly modular function. However, to answer the question of when an eta-quotient is a modular form we need the following theorem which provides information concerning the order of vanishing at the cusps of $\Gamma_0(N)$.

\begin{theorem}\label{ordvan}\cite[Thm. 1.65]{Ono1}
Let $f(z)$ be an eta-quotient satisfying the the conditions of Theorem \ref{etacon}. Let $c,d\in\N$ with $d|N$ and $(c,d) = 1$. Then, the order of vanishing of $f(z)$ at the cusp $\frac{c}{d}$ is \[v_d = \frac{N}{24}\sum_{\delta|N} \frac{(d,\delta)^2 r_\delta}{(d,\frac{N}{d})d\delta}.\]
\end{theorem}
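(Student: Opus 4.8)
The plan is to reduce the computation to the behavior of each factor $\eta(\delta z)$ at the cusp $\tfrac{c}{d}$ and then reassemble the product. Since $(c,d)=1$ I can choose $\sigma=\begin{pmatrix} c & b_0\\ d & d_0\end{pmatrix}\in\SL_2(\Z)$ with $\sigma\cdot\infty=\tfrac{c}{d}$, and by definition the order of vanishing of $f$ at $\tfrac{c}{d}$ is the order of vanishing at $\infty$ of $f|_k\sigma(z)=(dz+d_0)^{-k}f(\sigma z)$, measured in the local uniformizer of the cusp. Because $f=\prod_{\delta\mid N}\etaq{\delta}{r_\delta}$ and the slash operator is multiplicative, the problem decouples into understanding $\eta(\delta\sigma z)$ for each $\delta\mid N$.

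The key step is an explicit Hermite-normal-form factorization. The matrix $\begin{pmatrix}\delta&0\\0&1\end{pmatrix}\sigma=\begin{pmatrix}\delta c & \delta b_0\\ d & d_0\end{pmatrix}$ has determinant $\delta$, and setting $g=(\delta c,d)=(\delta,d)$ (using $(c,d)=1$) one finds $\gamma_\delta=\begin{pmatrix}\ast&\ast\\ v_\delta & s_\delta\end{pmatrix}\in\SL_2(\Z)$ with
$$\begin{pmatrix}\delta&0\\0&1\end{pmatrix}\sigma=\gamma_\delta\begin{pmatrix}g&b'\\0&\delta/g\end{pmatrix},\qquad w_\delta:=\begin{pmatrix}g&b'\\0&\delta/g\end{pmatrix}z=\frac{g^2}{\delta}z+\frac{b'g}{\delta}.$$
Applying $\eta(\gamma_\delta w)=\epsilon(\gamma_\delta)(v_\delta w+s_\delta)^{1/2}\eta(w)$ from Section \ref{EtaQuotient} and matching lower rows in the factorization gives the identity $v_\delta w_\delta+s_\delta=(dz+d_0)/(\delta/g)$, so the half-integral automorphy factors, raised to the power $r_\delta$ and multiplied together, reconstitute $(dz+d_0)^{\frac12\sum r_\delta}=(dz+d_0)^{k}$ up to a nonzero constant. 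This cancels the $(dz+d_0)^{-k}$ coming from the slash operator, so that
$$f|_k\sigma(z)=C\prod_{\delta\mid N}\eta(w_\delta)^{r_\delta}$$
for a nonzero constant $C$, and the order of vanishing is governed entirely by the $\eta(w_\delta)$.

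It remains to read off leading exponents and renormalize. Since $\eta(w)=e^{2\pi i w/24}\prod_{n\ge1}(1-e^{2\pi i n w})$ has leading term $e^{2\pi i w/24}$, the factor $\eta(w_\delta)$ contributes order $\frac{1}{24}\cdot\frac{g^2}{\delta}=\frac{(d,\delta)^2}{24\delta}$ in the variable $q=e^{2\pi i z}$, so $f|_k\sigma$ has $q$-order $\frac{1}{24}\sum_{\delta\mid N}\frac{(d,\delta)^2 r_\delta}{\delta}$. The width of the cusp $\tfrac{c}{d}$ for $\Gamma_0(N)$ (with $d\mid N$) is $h=\frac{N}{(d,N/d)\,d}$, so the genuine local uniformizer is $q_h=e^{2\pi i z/h}=q^{1/h}$ and the order measured in $q_h$ is $h$ times the $q$-order. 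Multiplying through yields
$$v_d=\frac{N}{24}\sum_{\delta\mid N}\frac{(d,\delta)^2 r_\delta}{(d,N/d)\,d\,\delta},$$
as claimed.

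The main obstacle is careful bookkeeping rather than any conceptual leap: one must carry out the factorization, verify the identity $v_\delta w_\delta+s_\delta=(dz+d_0)/(\delta/g)$ that produces the clean cancellation of the weight-$k$ automorphy factors, and track the various roots of unity absorbed into the constant $C$. Here the two congruence hypotheses modulo $24$ of Theorem \ref{etacon} are exactly what force all the fractional phase ambiguities in the $\epsilon(\gamma_\delta)$ and in $e^{2\pi i b'g/(24\delta)}$ to assemble into a single-valued transformation law with a well-defined Dirichlet character, so that the $q$-expansion, and hence the notion of order of vanishing at the cusp, is unambiguous. The remaining care is in establishing the width formula $h=\frac{N}{(d,N/d)\,d}$ and confirming the passage from $q$ to $q_h$.
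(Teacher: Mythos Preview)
The paper does not prove this statement at all: Theorem~\ref{ordvan} is quoted verbatim from \cite[Thm.~1.65]{Ono1} and used as a black box, so there is no ``paper's own proof'' to compare against. Your sketch is the standard argument and is essentially correct. The Hermite factorization of $\begin{pmatrix}\delta&0\\0&1\end{pmatrix}\sigma$ with $g=(\delta,d)$, the verification that $v_\delta w_\delta+s_\delta=(dz+d_0)\cdot g/\delta$, the resulting cancellation of the weight-$k$ automorphy factor, the leading $q$-exponent $\tfrac{(d,\delta)^2}{24\delta}$ coming from $\eta(w_\delta)$, and the final multiplication by the cusp width $h=\tfrac{N}{(d,N/d)\,d}$ are all fine. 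The only place to be a little more careful in a fully written-out version is the tracking of the $24$th roots of unity $\epsilon(\gamma_\delta)$ and the phases $e^{2\pi i b'g/(24\delta)}$; you correctly observe that the two congruence conditions in Theorem~\ref{etacon} are precisely what make these assemble into a genuine character rather than a multiplier system, so the constant $C$ is nonzero and the order of vanishing is well-defined.
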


\section{Proof of Results}
In this section, we will provide the proofs for the results given in Section 1. We will assume the eta-quotients being discussed always have $N = p>3$ a prime, unless otherwise stated. From Theorems \ref{etacon} and \ref{ordvan}, we have conditions that tell us when an eta-quotient is a holomorphic modular form. Thus, we will use the following equations: \begin{align}
\frac{1}{2}(r_1 + r_p) &= k\label{wteqn}\\
r_1+pr_p &\equiv 0 \pmod{24}\\
pr_1+r_p &\equiv 0\pmod{24}\\
v_1 &= \frac{1}{24}(pr_1+r_p) \label{uord}\\
v_p &= \frac{1}{24}(r_1+pr_p),\label{pord}
\end{align}
where $v_1$ and $v_p$ are the orders of vanishing at the two cusps of $\Gamma_1(p)$, $i\infty$ and $\frac{1}{p}$, respectively.

For a fixed prime $p$ and a fixed weight $k$, we see that it is possible to express $r_p$ in terms of $r_1$ by (\ref{wteqn}). It is convenient to rewrite (\ref{uord}) and (\ref{pord}) as \begin{align}
24v_1 &= 2k + (p-1)r_1\label{uordvan}\\
24v_p &=  2kp + (1-p)r_1.\label{pordvan}
\end{align} It is now clear that we can relate the orders of vanishing to the weight of an eta-quotient by \begin{equation}\label{ordvaneqn}
24(v_1+v_p) = 2k(p+1) .
\end{equation}
We begin the discussion for counting eta-quotients of level $\Gamma_0(p)$ by looking at possible conditions on $k$. These conditions were stated in Theorem \ref{kcond}, which we restate here for convenience.

\begin{theorem*}
Let $p>3$ be a prime and $k$ be an integer. Then there exists $f(z) = \eta^{r_1}(z)\eta^{r_p}(pz)$ such that $f(z)$ is a weakly holomorphic modular form with weight $k$ of level $\Gamma_1(p)$ if and only if $k$ is divisible by $h = \frac{1}{2}\gcd(p-1,24)$. 
\end{theorem*}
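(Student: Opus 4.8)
The plan is to translate the existence of an eta-quotient $f(z)=\eta^{r_1}(z)\eta^{r_p}(pz)$ of weight $k$ and level $\Gamma_1(p)$ entirely into an arithmetic condition on $r_1$ (since $r_p$ is determined by $r_1$ via $r_1+r_p=2k$), and then check when that condition is solvable in integers. The only constraints to satisfy are: $r_1,r_p\in\Z$ (automatic once $r_1\in\Z$ and $2k\in\Z$), and the two congruences $r_1+pr_p\equiv 0\pmod{24}$ and $pr_1+r_p\equiv 0\pmod{24}$ from Theorem~\ref{etacon}. Using $r_p=2k-r_1$, these become $2kp+(1-p)r_1\equiv 0\pmod{24}$ and $2k+(p-1)r_1\equiv 0\pmod{24}$, which are negatives of one another modulo $24$, hence equivalent to the single congruence
\begin{equation}\label{singlecong}
(p-1)r_1\equiv -2k\pmod{24}.
\end{equation}
So the theorem reduces to: \eqref{singlecong} has a solution $r_1\in\Z$ if and only if $h=\tfrac12\gcd(p-1,24)$ divides $k$. (Weak holomorphy imposes no further restriction: condition (3) of being a weakly modular function only asks for meromorphy at the cusps, which is automatic for an eta-quotient by Theorem~\ref{ordvan}, and the character being possibly nontrivial is allowed since we only need level $\Gamma_1(p)$, where $M^{!}_k(\Gamma_1(p))$ contains all the $M^{!}_k(p,\chi)$.)

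The key step is the elementary number theory of \eqref{singlecong}. A linear congruence $ax\equiv b\pmod{m}$ is solvable iff $\gcd(a,m)\mid b$; here $a=p-1$, $m=24$, $b=-2k$, so solvability is equivalent to $\gcd(p-1,24)\mid 2k$, i.e. $\tfrac12\gcd(p-1,24)=h$ divides $k$ after dividing through by $2$ — here one must be slightly careful, since $\gcd(p-1,24)$ is even (as $p>3$ is prime, $p-1$ is even, so $2\mid\gcd(p-1,24)$), so $\gcd(p-1,24)\mid 2k \iff \tfrac{\gcd(p-1,24)}{2}\mid k \iff h\mid k$. That settles both directions at once: if $h\mid k$ pick any solution $r_1$ of \eqref{singlecong}, set $r_p=2k-r_1$, and Theorem~\ref{etacon} guarantees $f$ transforms correctly under $\Gamma_0(p)$ (in particular under $\Gamma_1(p)$), while holomorphy on $\Hc$ and meromorphy at the two cusps of $\Gamma_1(p)$ are automatic; conversely, if such an $f$ exists, its exponents satisfy \eqref{singlecong}, forcing $h\mid k$.

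The main obstacle — really the only place needing care — is confirming that the two congruences \eqref{uord}--\eqref{pord} (equivalently the two displayed congruences in Theorem~\ref{etacon} specialized to $N=p$) collapse to the single congruence \eqref{singlecong}, and that nothing else is being silently required. Concretely I would: (i) add the two congruences $r_1+pr_p\equiv0$ and $pr_1+r_p\equiv0\pmod{24}$ to get $(p+1)(r_1+r_p)=2k(p+1)\equiv0\pmod{24}$, which is implied by $h\mid k$ and consistency of parity, and subtract them to get $(p-1)(r_1-r_p)\equiv0\pmod{24}$; then substitute $r_1-r_p=2r_1-2k$ to recover \eqref{singlecong} up to sign. (ii) Note that $r_1\in\Z$ together with $r_1+r_p=2k\in\Z$ gives $r_p\in\Z$ for free, so the ``$r_d\in\Z$'' clause in the definition of eta-quotient is not an extra obstruction. (iii) Observe $k$ need not be an integer a priori in Theorem~\ref{etacon} (only $k=\tfrac12\sum r_\delta\in\Z$ is required there), but here the theorem statement fixes $k\in\Z$, so $2k$ is an even integer, matching the parity of $\gcd(p-1,24)$; this is exactly why the factor of $\tfrac12$ in $h$ appears and why dividing \eqref{singlecong}'s solvability condition by $2$ is legitimate. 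Once these bookkeeping points are verified, the proof is a one-line appeal to the solvability criterion for linear congruences.
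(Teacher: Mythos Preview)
Your overall strategy is correct and considerably cleaner than the paper's own argument: reducing everything to the single linear congruence $(p-1)r_1\equiv -2k\pmod{24}$ and invoking the solvability criterion $\gcd(p-1,24)\mid 2k$ handles both directions uniformly, whereas the paper works from the order-of-vanishing equations, treats the two directions separately, and in the forward direction resorts to ad hoc casework on $p\bmod 24$ (handling $p\equiv 1$ and $p\equiv 17$ individually). Your approach buys a one-line proof in place of that case analysis; the paper's approach buys nothing extra here, though its use of $v_1,v_p$ is what feeds into the later counting theorems.

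There is, however, one genuine slip. You assert that the two congruences $2kp+(1-p)r_1\equiv 0$ and $2k+(p-1)r_1\equiv 0\pmod{24}$ are ``negatives of one another modulo $24$.'' They are not: the negative of the second is $-2k+(1-p)r_1$, which differs from the first by $2k(p+1)$, and $2k(p+1)\equiv 0\pmod{24}$ is not automatic (e.g.\ $p=5$, $k=1$). The correct reason the two congruences are equivalent is that $p$ is a unit modulo $24$ with $p^2\equiv 1\pmod{24}$ (true for any integer coprime to $6$), so multiplying $r_1+pr_p\equiv 0$ by $p$ yields $pr_1+r_p\equiv 0$ and conversely. Once you replace the ``negatives'' sentence with this observation, your argument is complete; your later add/subtract maneuver in item (i) does not by itself repair the gap, since the sum $2k(p+1)\equiv 0\pmod{24}$ is a consequence of the pair, not a proof of their equivalence. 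One further small point worth making explicit for the forward direction: you should say why an eta-quotient lying in $M^!_k(\Gamma_1(p))$ forces the congruence to hold --- this is because the $q$-expansion at $\infty$ has leading exponent $\tfrac{1}{24}(r_1+pr_p)$, which must be an integer (the paper records this as Lemma~\ref{funlem} for the holomorphic case, and the same argument works verbatim for weakly holomorphic forms).
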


\begin{proof}
($\rightarrow$) Suppose that $f(z)\in M^!(\Gamma_1(p))$. We note that it suffices to show that we can satisfy (\ref{pordvan}) and (\ref{ordvaneqn}) since (\ref{uordvan}) can be gained from these two.

From (\ref{ordvaneqn}), we see that we want $\frac{k(p+1)}{12}$ to be an integer as the orders of vanishing, $v_1$ and $v_p$, are integers. From here we can find a divisor, $d$, of $k$ that would make this possible. Then by (\ref{pordvan}), we know that we need  $24|2kp - (p-1)r_1$. This gives us that \begin{equation}\label{kcondeqn}2pdn\equiv(p-1)r_1\pmod{24},\end{equation} where $dn=k$. Let $\delta = \gcd(24,2dp,p-1)$. Then, we get that $\frac{2dp}{\delta},\frac{p-1}{\delta}\in(\Z/(24/\delta)\Z)^\times$ and gains us our desired conclusion, where $d=h=\frac{1}{2}\gcd(p-1,24)$; except for when $p$ is congruent to $1$ or $17$ modulo 24.

Suppose that $p\equiv 1 \pmod{24}$. Then we can rewrite (\ref{kcondeqn}) as \[12n\equiv 0\pmod{24}.\] This tells us that $n$ must be even. Thus, we have that $k\equiv 0\pmod{12}$. We further note that $12 = \frac{1}{2}\gcd(24\ell,24)$. Thus, showing our result for this case.

Suppose that $p\equiv 17\pmod{24}$. Rewriting (\ref{kcondeqn}), we get \[68n \equiv 16r_1\pmod{24} .\] This tells us that \[5n\equiv 4r_1\pmod{6}.\] Since $5\in\Z/6\Z^\times$, we have \[n\equiv 2r_1\pmod{6}.\] Therefore, we know that $n$ must be even. Therefore, we have that $4|k$. As $4=\frac{1}{2}\gcd(24\ell + 16, 24)$, we reach our desired conclusion.

($\leftarrow$) Suppose that $h=\frac{1}{2}\gcd(p-1,24)$ divides $k$. We want to show that there exists $f(z) = \eta^{r_1}(z)\eta^{r_p}(pz)$ in $M^!_k(\Gamma_1(p))$. It is sufficient to show that there exists $r_1$ such that $r+p(2k-r_1)\equiv 0\pmod{24}$. We can interpret this as \[r_1(1-p) + 2pk = 24N,\] where $N\in\Z$. As $2h$ divides every term, we can get \[-r_1\left(\frac{p-1}{2h}\right) + p\frac{2k}{2h} = \frac{24}{2h}N.\] Therefore, we have that \[\frac{p-1}{2h}r_1 \equiv p\frac{k}{h}\pmod{\frac{24}{2h}}.\] Since $\frac{p-1}{2h}$ and $\frac{24}{2h}$ are relatively prime, $\frac{p-1}{2h}$ has an inverse in $\Z/(\frac{24}{2h})\Z$. Thus, there exists a unique $r_1 \in \Z/(\frac{24}{2h})\Z$ such that\[r_1 \equiv p\frac{k}{h}\left(\frac{p-1}{2h}\right)^{-1} \pmod{\frac{24}{2h}}.\]
\end{proof}

As mentioned in Section 1, we can extend Theorem \ref{kcond} to show when there exists $f(z) = \eta^{r_1}(z)\eta^{r_p}(pz)\in M_k(\Gamma_1(p))$. Before we do so, we need a lemma.

\begin{lemma}\label{funlem}
Let $N$ be an integer such that $\gcd(N,6) = 1$. Let $f(z)$ be given by \[f(z) = \prod_{d|N} \eta^{r_d}(dz).\] If $f\in M_k(\Gamma_0(N),\chi)$, then it must be the case that \[\sum_{d|N} dr_d\equiv 0\pmod{24},\quad\text{and}\quad \sum_{d|N} \frac{N}{d}r_d \equiv 0\pmod{24}.\]
\end{lemma}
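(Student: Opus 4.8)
The plan is to read this lemma as the converse of Theorem~\ref{etacon}: assuming that the eta-quotient $f$ really does transform like a modular form on $\Gamma_0(N)$ with character $\chi$ and is holomorphic at the cusps, I want to force the two congruences modulo $24$. I would extract one congruence from the behavior of $f$ at the cusp $i\infty$ and the other from the cusp $0$, using only two very explicit elements of $\Gamma_0(N)$, namely $T=\begin{pmatrix}1&1\\0&1\end{pmatrix}$ and $\begin{pmatrix}1&0\\N&1\end{pmatrix}$.

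First I would prove $\sum_{d\mid N}dr_d\equiv 0\pmod{24}$. Since $T\in\Gamma_0(N)$ and every Dirichlet character satisfies $\chi(1)=1$, the relation $f|_k\gamma=\chi(d)f$ with $\gamma=T$ gives $f(z+1)=f(z)$. On the other hand, $\eta(\delta(z+1))=e^{2\pi i\delta/24}\eta(\delta z)$ yields $f(z+1)=e^{2\pi i(\sum_{d\mid N}dr_d)/24}\,f(z)$, and since $\eta$ is nonvanishing on $\Hc$ we have $f\not\equiv 0$, so $\sum_{d\mid N}dr_d\equiv 0\pmod{24}$; equivalently, the $q$-expansion of $f$ at $i\infty$ must proceed in integral powers of $q$.

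Next I would prove $\sum_{d\mid N}\tfrac Nd r_d\equiv 0\pmod{24}$ by transporting $f$ to the cusp $0$ via $S=\begin{pmatrix}0&-1\\1&0\end{pmatrix}$. Applying $\eta(-1/w)=\sqrt{-iw}\,\eta(w)$ to each factor $\eta(dz)$, and using that $\tfrac12\sum_{d\mid N}r_d=k$ is an integer to collapse the accumulated $\sqrt{-iz}$ factors into $(-iz)^k$, one finds $(f|_kS)(z)=C\prod_{d\mid N}\eta^{r_d}(z/d)$ for an explicit nonzero constant $C$. Expanding $\eta(z/d)=e^{2\pi iz/(24d)}\prod_{n\ge1}(1-e^{2\pi inz/d})$ and rewriting everything in the variable $q^{1/N}=e^{2\pi iz/N}$ (legitimate since $d\mid N$) shows $f|_kS=C\,(q^{1/N})^{\beta}\bigl(1+(\text{power series in }q^{1/N})\bigr)$ with $\beta=\tfrac1{24}\sum_{d\mid N}\tfrac Nd r_d$. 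Finally, since $\begin{pmatrix}1&0\\N&1\end{pmatrix}\in\Gamma_0(N)$ and $S^{-1}\begin{pmatrix}1&0\\N&1\end{pmatrix}S=\begin{pmatrix}1&-N\\0&1\end{pmatrix}$, modularity forces $(f|_kS)(z-N)=(f|_kS)(z)$; imposing this on $C(q^{1/N})^{\beta}(1+\cdots)$ forces $e^{-2\pi i\beta}=1$, i.e.\ $\beta\in\Z$, which is the second congruence. (One can also package this step through the Fricke involution $W_N=\begin{pmatrix}0&-1\\N&0\end{pmatrix}$: $f|_kW_N$ is a nonzero constant times the reversed eta-quotient $\prod_{d\mid N}\eta^{r_d}(\tfrac Nd z)$ and lies in $M_k(\Gamma_0(N),\bar\chi)$, so the first step applied to it gives the claim at once.)

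I expect the only delicate point to be the second step: one must carefully track the roots of unity and square roots produced by the $\eta$-functional equation, confirm that $f|_kS$ really is a nonzero constant times $\prod_{d\mid N}\eta^{r_d}(z/d)$ with no residual power of $z$, and verify that this product has a bona fide Fourier expansion in integral powers of $q^{1/N}$ with a well-defined leading exponent $\beta$. The hypothesis $\gcd(N,6)=1$ (equivalently $\gcd(N,24)=1$) is the regime relevant to the paper and keeps the character-and-multiplier arithmetic transparent; the necessity argument sketched above does not seem to require it in an essential way, so I would carry it out for general $N>1$ and simply indicate where the hypothesis is invoked.
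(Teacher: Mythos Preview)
Your argument is correct. The first congruence is obtained exactly as in the paper: the $q$-expansion of $f$ at $i\infty$ begins with $q^{\frac{1}{24}\sum_{d\mid N}dr_d}$, and membership in $M_k(\Gamma_0(N),\chi)$ forces this exponent to be integral.

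For the second congruence, however, you take a genuinely different route from the paper. The paper does not go to the cusp $0$ at all: it instead uses the arithmetic fact that when $\gcd(N,6)=1$ every divisor $d\mid N$ satisfies $d^2\equiv 1\pmod{24}$, hence $Nd\equiv N/d\pmod{24}$, and so
\[
0\equiv N\sum_{d\mid N}dr_d\equiv\sum_{d\mid N}\frac{N}{d}r_d\pmod{24}.
\]
This is a one-line deduction of the second congruence from the first, and it is precisely here that the hypothesis $\gcd(N,6)=1$ is used. Your approach via $S$ (or, equivalently, the Fricke involution $W_N$) is more labor-intensive but has the advantage you already noticed: it establishes the second congruence for arbitrary $N$, not just $N$ coprime to $6$. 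In short, the paper's proof is shorter but genuinely needs the hypothesis, while yours is longer but more general.
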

\begin{proof}
Since $f\in M_k(\Gamma_0(p),\chi)$, the $q$-series expansion of $f$ about the cusp at infinity must look like \[f(z) = \sum_{n\geq 0} c_nq^n.\] Recall that $\eta(z) = q^{1/24} \prod_{n\geq 1} (1-q^n)$. Thus, we would have that \[\prod_{d|N} \eta^{r_d}(dz) = q^{\frac{\sum_{d|N} r_d d}{24}}\prod_{n\geq 1}\left(\prod_{d|N}(1-q^{dn})^{r_d}\right).\] Therefore, we need $24$ to divide \[\sum_{d|N} dr_d. \] We also note that for all primes $p\geq 5$, $p^2\equiv 1\pmod{24}$. Therefore, $Nd \equiv \frac{N}{d}\pmod{24}$. Thus, we have that \[0\equiv N\sum_{d|N} dr_d \equiv \sum_{d|N} \frac{N}{d}r_d \pmod{24}.\]
\end{proof}

As we wish to focus on holomorphic modular forms, we now want non-negative orders of vanishing, i.e. $v_1,v_p\geq 0$. Using this condition and (\ref{ordvaneqn}) we also have that $v_1,v_p\leq \frac{k(p+1)}{12}$. We use Figure \ref{graph} to show the line  that relates $v_1$ to $v_p$ given a fixed $k$ and $p$. We note that given equation (\ref{uordvan}), we can define $v_1$ in terms of $r_1$, and vice-versa. Thus, to count the number of eta-quotients of our desired form, it suffices to count the number of possible orders of vanishing. As orders of vanishing are integer values, we only consider integer points on the line given in Figure \ref{graph}.

\begin{figure}[h] \begin{center}
  \begin{tikzpicture}[scale=2, thick]
  \draw (0,0) --(2,0) node[midway,below]{$\frac{k(p+1)}{12}$} (2,0)node[right,right]{$v_1$}  -- (1,0) node[midway,below]{} -- (0,1) node[midway,above]{} -- (0,2) node[above,above]{$v_p$}
-- (0,0) node[midway,left]{$\frac{k(p+1)}{12}$};
  \end{tikzpicture}\end{center}
  \caption{The line $v_1 + v_p = \frac{k(p+1)}{12}$}\label{graph}
\end{figure}
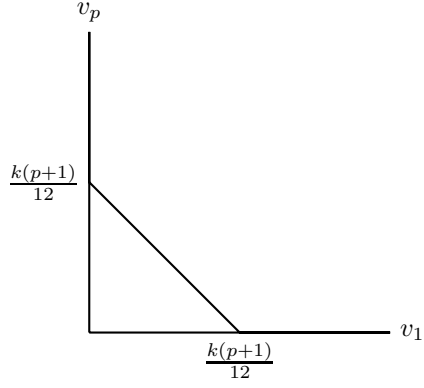

Furthermore, from equation (\ref{uordvan}), we have that \[(p-1)r_1 = 24v_1 - 2k.\] This implies that $24v_1 - 2k \equiv 0 \pmod{p-1}$. In other words, we can write $24v_1 - 2k = (p-1)\ell$ where $\ell\in\Z$. Recall how we defined $h$ in Theorem \ref{kcond}. Since $2h|2k$ and $2h|24$, we can write $\frac{24}{2h}v_1 - k' = \frac{p-1}{2h}\ell$. We also know that $2h|p-1$. Therefore, we have that \[\frac{24}{2h}v_1 \equiv k' \pmod{\frac{p-1}{2h}}.\] Since we have that $2h = \gcd (p-1, 24)$, we get that $1 = \gcd\left( \frac{p-1}{2h}, \frac{24}{2h}\right)$. This implies that we have a multiplicative inverse of $\frac{24}{2h}$ in $\Z/(\frac{p-1}{2h})\Z$. Thus, we have that \begin{equation}\label{ordcong} v_1\equiv \left(\frac{24}{2h}\right)^{-1}k'\pmod{\frac{p-1}{2h}}.\end{equation} Now we have the following corollary which follows from this explanation as well as Theorem \ref{kcond} and Lemma \ref{funlem}.

\begin{coro}
Let $p\geq 5$ be a prime. There exists $f(z) = \eta^{r_1}(z)\eta^{r_p}(pz)$ in $M_k(\Gamma_1(p)$ if and only if $h=\frac{1}{2}\gcd(p-1,24)$ divides $k$ and $\frac{p-1}{2h} \leq \frac{k(p+1)}{12}$.
\end{coro}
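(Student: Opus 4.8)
The plan is to convert the existence of a holomorphic eta-quotient of the shape $\eta^{r_1}(z)\eta^{r_p}(pz)$ of weight $k$ into a statement about integer points on the line $v_1+v_p=\frac{k(p+1)}{12}$ of Figure \ref{graph}, and then pin down exactly when such a point exists.

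First I would set up the reduction. If $f(z)=\eta^{r_1}(z)\eta^{r_p}(pz)$ lies in $M_k(\Gamma_1(p))$, then its holomorphic $q$-expansion at $i\infty$ forces the two divisibility conditions of Theorem \ref{etacon} on the exponents (this is the argument of Lemma \ref{funlem}, which needs only $\gcd(p,6)=1$), so Theorem \ref{ordvan} applies; conversely any $r_1,r_p$ satisfying (\ref{wteqn}) together with those conditions produce a weakly holomorphic form. Since the only cusps relevant here are $i\infty$ and $\frac1p$, holomorphy of $f$ amounts to $v_1\ge 0$ and $v_p\ge 0$, with $v_1,v_p$ given by (\ref{uord})--(\ref{pord}); using (\ref{ordvaneqn}) this is the single requirement that there be an integer $v_1$ with $0\le v_1\le \frac{k(p+1)}{12}$ arising from an admissible $r_1$. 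Next, Theorem \ref{kcond} tells us that the existence of even a weakly holomorphic such $f$ forces $h\mid k$, and when $h\mid k$ its proof shows the admissible $r_1$ form a single residue class modulo $\frac{24}{2h}$; pushing this through (\ref{uordvan}) (equivalently through (\ref{ordcong})) shows the associated $v_1$ run over a full arithmetic progression of common difference $\frac{p-1}{2h}$.

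Granting all this, the ``if'' direction is immediate: if $h\mid k$ and $\frac{p-1}{2h}\le\frac{k(p+1)}{12}$, then the interval $[0,\frac{k(p+1)}{12}]$ contains at least $\frac{p-1}{2h}$ consecutive integers, hence a representative of every residue class modulo $\frac{p-1}{2h}$, so some admissible $v_1$ lies in it; the corresponding $r_1$ and $r_p=2k-r_1$ then give $v_1,v_p\ge 0$, i.e.\ $f\in M_k(\Gamma_1(p))$ (and if the admissible class happens to contain $v_1=0$ one is in the non-cusp-form case of Theorem \ref{noncuspCount}).

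The step I expect to be the main obstacle is the necessity of $\frac{p-1}{2h}\le\frac{k(p+1)}{12}$: one has to rule out the single residue class of $v_1$ fixed by (\ref{ordcong}) slipping into a short interval $[0,\frac{k(p+1)}{12}]$ whose length is below $\frac{p-1}{2h}$. The natural approach is to combine $v_1\equiv\left(\frac{24}{2h}\right)^{-1}k'\pmod{\frac{p-1}{2h}}$ with the relation $v_1+v_p=\frac{k(p+1)}{12}$ and the non-negativity of both orders of vanishing to locate the least admissible $v_1\ge 0$ and compare it with $\frac{k(p+1)}{12}$. This is precisely the residue bookkeeping --- organized around the quantity $c$ --- that yields the three-case count of Theorem \ref{etaPcount}, and I would expect the small-weight range (where $\frac{k(p+1)}{12}$ is only slightly larger than $\frac{p-1}{2h}$) to be where the care is needed, handled by that finer analysis together with the boundary description in Theorem \ref{noncuspCount}.
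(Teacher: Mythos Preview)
Your reduction to lattice points on the line $v_1+v_p=\tfrac{k(p+1)}{12}$ and the congruence class
\[
v_1\equiv\left(\tfrac{24}{2h}\right)^{-1}k'\pmod{\tfrac{p-1}{2h}}
\]
is exactly the argument the paper sketches in the paragraph preceding the corollary, together with Theorem~\ref{kcond} and Lemma~\ref{funlem}. Your ``if'' direction is correct and matches the paper: once the interval $[0,\tfrac{k(p+1)}{12}]$ has length at least $\tfrac{p-1}{2h}$, it meets every residue class, hence the admissible one.

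The genuine gap is precisely the one you flag: the necessity of $\tfrac{p-1}{2h}\le\tfrac{k(p+1)}{12}$. You propose to close it by the residue bookkeeping around the quantity $c$ from Theorem~\ref{etaPcount}, but this cannot succeed, because the ``only if'' direction is false as stated. Take $p=11$, so $h=\tfrac12\gcd(10,24)=1$ and $\tfrac{p-1}{2h}=5$. For $k=2$ one has $\tfrac{k(p+1)}{12}=2<5$, yet
\[
\eta^2(z)\,\eta^2(11z)\in S_2(\Gamma_0(11))\subset M_2(\Gamma_1(11)),
\]
since $r_1+11r_p=11r_1+r_p=24$ and $v_1=v_p=1$. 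In other words, the admissible residue class for $v_1$ \emph{does} slip into the short interval; the paper's implicit passage from ``$v_1$ lies in a fixed class modulo $\tfrac{p-1}{2h}$'' to ``hence the interval must have length at least $\tfrac{p-1}{2h}$'' is a non sequitur. So your instinct that this step is the obstacle is exactly right, but the appropriate conclusion is that the corollary needs to be weakened (the inequality is sufficient, not necessary), not that a finer count will rescue it.
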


We note that by definition, cusp forms occur on the interior of our line, and non-cuspidal modular forms occur at the end points. For this reason it is useful to perform the counts of cusp forms and non-cuspidal modular forms separately. First, we prove the count of cusp forms given in Theorem \ref{etaPcount}, which we restate here for convenience.

\begin{theorem*}
Let $p>3$ be a prime. Let $k=hk'$ where $h$ is the needed divisor of $k$ given by Theorem \ref{kcond}. Let $c$ be the smallest positive integer representative of $\frac{k'h}{12}$ modulo $\frac{p-1}{2h}$.
\begin{enumerate}
\item For $c = \frac{k(p+1)}{12} - \floor{\frac{k(p+1)}{12\frac{p-1}{2h}}} \frac{p-1}{2h}$, the number of eta-quotients in $S_k(\Gamma_1(p))$ is \[\frac{k(p+1)}{12d}-1.\]
\item For $c < \frac{k(p+1)}{12} - \floor{\frac{k(p+1)}{12\frac{p-1}{2h}}} \frac{p-1}{2h}$, the number of eta-quotients in $S_k(\Gamma_1(p))$ is \[ \ceiling{\frac{k(p+1)}{12\frac{p-1}{2h}}}.\]
\item For $c > \frac{k(p+1)}{12} - \floor{\frac{k(p+1)}{12\frac{p-1}{2h}}} \frac{p-1}{2h}$, the number of eta-quotients in $S_k(\Gamma_1(p))$ is \[ \floor{\frac{k(p+1)}{12\frac{p-1}{2h}}}.\]
\end{enumerate}
\end{theorem*}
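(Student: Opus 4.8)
The plan is to reduce the count to counting integer points on the segment of Figure~\ref{graph}, via the correspondence set up in the lead-up to the theorem, and then to resolve the resulting arithmetic-progression count with the help of a symmetry between the two orders of vanishing. Write $M=\frac{k(p+1)}{12}$ and $m=\frac{p-1}{2h}$. First I would record the correspondence precisely: if $f(z)=\eta^{r_1}(z)\eta^{r_p}(pz)$ is a modular form of weight $k$ on $\Gamma_1(p)$ then it satisfies the congruences of Theorem~\ref{etacon} (by Lemma~\ref{funlem}), so by $(\ref{uord})$ and $(\ref{pord})$ the orders of vanishing $v_1,v_p$ are integers, and by $(\ref{ordvaneqn})$ they satisfy $v_1+v_p=M$ (hence $M\in\Z$); moreover $f$ is a cusp form exactly when $v_1\geq 1$ and $v_p=M-v_1\geq 1$, the exponent $r_1$ is recovered from $v_1$ by $(\ref{uordvan})$, and $(\ref{ordcong})$ pins $v_1$ to a fixed residue class modulo $m$, whose representative is $c$. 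Thus the eta-quotients in $S_k(\Gamma_1(p))$ correspond bijectively to the integers $v_1$ with $1\leq v_1\leq M-1$ and $v_1\equiv c\pmod m$, and it remains to count these.

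The key step is the observation that $v_1\equiv v_p\pmod m$. Subtracting $(\ref{uordvan})$ from $(\ref{pordvan})$ gives $24(v_p-v_1)=2(p-1)(k-r_1)$; dividing by $2h$ yields $\tfrac{24}{2h}(v_p-v_1)=\tfrac{p-1}{h}(k-r_1)$; since $\tfrac{p-1}{h}=2m$ the right-hand side is a multiple of $m$, and as $\gcd\!\left(\tfrac{24}{2h},m\right)=1$ (noted just before $(\ref{ordcong})$) we conclude $v_p\equiv v_1\pmod m$. This has two consequences. First, $M=v_1+v_p\equiv 2c\pmod m$, so the remainder $\rho:=M-\floor{M/m}\,m$ satisfies $\rho\equiv 2c\pmod m$; in particular if $c\equiv\rho\pmod m$ then $c\equiv 0\pmod m$, whence $\rho=0$, that is, $m\mid M$. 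Second, the solution set $\{v_1:1\leq v_1\leq M-1,\ v_1\equiv c\pmod m\}$ is stable under $v_1\mapsto M-v_1$, so it is an arithmetic progression of common difference $m$ symmetric about $M/2$: writing $a$ for its least element, its largest element is $M-a$, and (when the set is non-empty) its cardinality is $\frac{M-2a}{m}+1$.

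Finally I would turn $\frac{M-2a}{m}+1$ into the three stated forms by writing $M=\floor{M/m}\,m+\rho$ and comparing the residue $c$ with $\rho$, using the first consequence above. When $c<\rho$ one has $\rho\geq 1$, so $m\nmid M$, and the count is $\floor{M/m}+1=\ceiling{M/m}$; when $c>\rho$ the count is $\floor{M/m}$; and when $c=\rho$---which forces $m\mid M$---the count is $\tfrac{M}{m}-1$. Since $\tfrac{M}{m}=\frac{k(p+1)}{12\frac{p-1}{2h}}$, these are exactly cases (2), (3), and (1) of the theorem. I expect the main obstacle to be precisely this bookkeeping: one must keep the normalization of $c$ consistent with that of $\rho=M\bmod m$, handle the endpoints $v_1=1$ and $v_1=M-1$ carefully, and dispose of the degenerate possibilities (for instance $m=1$, or an empty solution set, in which case $\frac{M-2a}{m}+1=0$). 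The symmetry $v_1\equiv v_p\pmod m$ is what makes the trichotomy clean: without it the case $c=\rho$ with $m\nmid M$ could not be excluded, and then the Case~(1) formula $\tfrac{M}{m}-1$ would fail to be an integer.
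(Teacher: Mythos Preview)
Your reduction to counting integers $v_1$ with $1\le v_1\le M-1$ and $v_1\equiv c\pmod m$ is exactly what the paper does; both proofs count lattice points on the open segment of Figure~\ref{graph} lying in the fixed residue class determined by~(\ref{ordcong}).  Where you diverge is in the execution of the count.  The paper handles the three cases directly and informally: in its Case~1 it simply \emph{writes} the hypothesis as ``$c=0=\rho$'' and observes that then $m\mid M$, and in Cases~2 and~3 it argues by inspection whether an extra point fits between $\lfloor M/m\rfloor m$ and $M$.  You instead prove the extra ingredient $v_1\equiv v_p\pmod m$ (equivalently $M\equiv 2c\pmod m$), which the paper never states.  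This buys you two things: it \emph{derives} that $c=\rho$ forces $c\equiv 0$ and hence $m\mid M$ (so the Case~1 formula is an integer), and it makes the solution set symmetric under $v_1\mapsto M-v_1$, turning the count into the closed form $\tfrac{M-2a}{m}+1$ that you then match to $\lfloor M/m\rfloor$, $\lceil M/m\rceil$, and $\tfrac{M}{m}-1$.  Your route is slightly longer but more self-contained; the paper's is shorter but leans on the unexplained identification $c=0$ in Case~1.  One caution: the theorem says ``smallest \emph{positive} integer representative,'' while the paper's own proof takes $c=0$; your argument works under either convention, but you should fix one (the non-negative one, matching the paper's proof) when you pin down $a$ in each case.
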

\begin{proof}
Since we are only considering cusp forms, we can assume that $v_1$, $v_p >0$. The number of points on our line from Figure \ref{graph} which satisfy this inequality and the congruence from (\ref{ordcong}) is the number of eta-quotients. We now consider three cases.
\begin{itemize}
\item Case 1: Suppose $c = 0 = \frac{k(p+1)}{12} - \floor{\frac{k(p+1)}{12\frac{p-1}{2h}}} \frac{p-1}{2h}$. Then, we have that $v_1 \equiv 0 \pmod{\frac{p-1}{2h}}$. Furthermore, we note that $\frac{p-1}{2h} \left| \frac{k(p+1)}{12}\right.$. Thus, we have that the number of points which match our congruence is $\frac{k(p+1)}{12\frac{p-1}{2h}}$. However, we will note that one of these points gives us $v_p=0$, which is not desired. Therefore, the number of eta-quotients that are in $S_k(\Gamma_1(p))$ is \[\frac{k(p+1)}{12\frac{p-1}{2h}}-1.\]
\item Case 2: Suppose $c<\frac{k(p+1)}{12} - \floor{\frac{k(p+1)}{12\frac{p-1}{2h}}} \frac{p-1}{2h}$. Then we note that $\floor{\frac{k(p+1)}{12\frac{p-1}{2h}}}\frac{p-1}{2h}$ is less than $\frac{k(p+1)}{12}$. However, since $c<\frac{k(p+1)}{12} - \floor{\frac{k(p+1)}{12\frac{p-1}{2h}}} \frac{p-1}{2h}$, we have another point to count that is between $\floor{\frac{k(p+1)}{12\frac{p-1}{2h}}}\frac{p-1}{2h}$ and $\frac{k(p+1)}{12}$. Therefore, the number of eta-quotients that are in $S_k(\Gamma_1(p))$ is \[\ceiling{\frac{k(p+1)}{12\frac{p-1}{2h}}}.\]
\item Case 3: Suppose $c>\frac{k(p+1)}{12} - \floor{\frac{k(p+1)}{12\frac{p-1}{2h}}} \frac{p-1}{2h}$. Then we note that $\floor{\frac{k(p+1)}{12\frac{p-1}{2h}}}\frac{p-1}{2h}$ is less than $\frac{k(p+1)}{12}$. Since $c>\frac{k(p+1)}{12} - \floor{\frac{k(p+1)}{12\frac{p-1}{2h}}} \frac{p-1}{2h}$, we have no more points to count between $\floor{\frac{k(p+1)}{12\frac{p-1}{2h}}}\frac{p-1}{2h}$ and $\frac{k(p+1)}{12}$. Therefore, the number of eta-quotients that are in $S_k(\Gamma_1(p))$ is \[\floor{\frac{k(p+1)}{12\frac{p-1}{2h}}}.\]
\end{itemize}
\end{proof}
Second, we prove the count of non-cusp forms given in Theorem \ref{noncuspCount}, which we restate here for convenience.
\begin{theorem*}
Let $p>3$ be a prime. Then, $M_k(\Gamma_0(p))\setminus S_k(\Gamma_1(p))$ contains at least one eta-quotient if and only if $\frac{p-1}{2} | k$. Furthermore, for $k>0$ and $\frac{p-1}{2}|k$, there are exactly two eta-quotients in $M_k(\Gamma_1(p))\setminus S_k(\Gamma_1(p))$, which are of the form \[ \frac{\etaq{p}{\frac{2pk}{p-1}}}{\etaq{}{\frac{2k}{p-1}}},\quad\text{and}\quad \frac{\etaq{}{\frac{2pk}{p-1}}}{\etaq{p}{\frac{2k}{p-1}}}.\]
\end{theorem*}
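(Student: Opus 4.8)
The plan is to read the non-cuspidal eta-quotients directly off the geometric picture set up around Figure \ref{graph}: an eta-quotient $\eta^{r_1}(z)\eta^{r_p}(pz)$ of weight $k$ lies in $M_k(\Gamma_1(p))$ precisely when $(v_1,v_p)$ is an integer point on the segment $v_1+v_p=\frac{k(p+1)}{12}$ with $v_1,v_p\ge 0$ satisfying the congruence (\ref{ordcong}), and such a point gives a cusp form exactly when $v_1,v_p>0$. Hence for $k>0$ a non-cuspidal eta-quotient must sit at one of the two endpoints $(0,\frac{k(p+1)}{12})$ or $(\frac{k(p+1)}{12},0)$; in particular there are at most two of them, and everything reduces to deciding when each endpoint is admissible and what exponents it produces.

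First I would analyze the endpoint $v_1=0$. By (\ref{uordvan}) this forces $r_1=-\frac{2k}{p-1}$, and since $r_1\in\Z$ and $p-1$ is even, this is possible if and only if $\frac{p-1}{2}\mid k$. For the converse, suppose $\frac{p-1}{2}\mid k$; then $r_1=-\frac{2k}{p-1}\in\Z$ and $r_p=2k-r_1=\frac{2pk}{p-1}\in\Z$, with $p r_1+r_p=0$ and $r_1+p r_p=2k(p+1)$, so the hypotheses of Theorems \ref{etacon} and \ref{ordvan} hold once $24\mid 2k(p+1)$, i.e.\ $12\mid k(p+1)$; granting this, Theorem \ref{ordvan} gives $v_1=0$ and $v_p=\frac{k(p+1)}{12}\ge 0$, so $\frac{\etaq{p}{\frac{2pk}{p-1}}}{\etaq{}{\frac{2k}{p-1}}}$ is a holomorphic modular form that does not vanish at $i\infty$, hence lies in $M_k(\Gamma_1(p))\setminus S_k(\Gamma_1(p))$. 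A symmetric computation with (\ref{pordvan}) treats the endpoint $v_p=0$: it forces $r_1=\frac{2pk}{p-1}$, which is an integer if and only if $(p-1)\mid 2pk$, equivalently $\frac{p-1}{2}\mid k$ since $\gcd(p-1,p)=1$, and then $r_p=-\frac{2k}{p-1}$, giving $\frac{\etaq{}{\frac{2pk}{p-1}}}{\etaq{p}{\frac{2k}{p-1}}}$.

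Combining the two endpoint analyses proves the statement: each endpoint is admissible exactly when $\frac{p-1}{2}\mid k$, which gives the ``if and only if''; and for $k>0$ the two endpoints are genuinely distinct — the exponent of $\eta(z)$ is negative in the first eta-quotient and positive in the second — and they exhaust the non-cuspidal eta-quotients, so the count is exactly two. I expect the only step needing actual work to be the divisibility $12\mid k(p+1)$, the thing that upgrades the endpoint exponent vectors from weakly holomorphic to holomorphic; the cleanest way to get it is to note that $\frac{p-1}{2}\mid k$ implies $h\mid k$, so by Theorem \ref{kcond} some weakly holomorphic eta-quotient of weight $k$ exists, and its orders of vanishing are integers summing to $\frac{k(p+1)}{12}$, forcing $12\mid k(p+1)$ (alternatively one checks this by a short case analysis on $p\bmod 12$, each residue supplying the needed factors of $2$ and $3$). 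Finally, the degenerate case $k=0$, in which the constant function $1$ is the unique non-cuspidal eta-quotient with $v_1=v_p=0$, is consistent with and covered by the hypothesis $k>0$ in the second assertion.
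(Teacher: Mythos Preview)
Your proposal is correct and follows essentially the same route as the paper: both arguments reduce the non-cuspidal case to the two endpoints $v_1=0$ and $v_p=0$ of the segment in Figure~\ref{graph}, solve (\ref{uordvan})/(\ref{pordvan}) for $(r_1,r_p)$ at each endpoint, read off the divisibility $\tfrac{p-1}{2}\mid k$ from integrality of the exponents, and then verify the congruences of Theorem~\ref{etacon}. The only noticeable difference is in how the remaining congruence $24\mid 2k(p+1)$ is checked: the paper substitutes $k=\tfrac{p-1}{2}m$ to get $r_1+pr_p=(p^2-1)m$ and invokes $p^2\equiv 1\pmod{24}$ directly, whereas you route it through Theorem~\ref{kcond} (observing $h\mid\tfrac{p-1}{2}\mid k$). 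Both are fine; the paper's substitution is a touch more self-contained.
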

\begin{proof}
($\rightarrow$) Suppose $f(x) \in M_k(\Gamma_1(p))\setminus S_k(\Gamma_1(p))$ is an eta-quotient satisfying Theorem \ref{etacon}. Then, we know that at least one of the orders of vanishing must be zero. Thus, we have two cases.
 \begin{itemize}
\item Case 1: Suppose $v_1 = 0$. Then, $pr_1 + r_p = 0$, which can be rewritten to get $(p-1)r_1 = -2k$. Therefore, we have that $\frac{p-1}{2} |k$. Furthermore, we can get that $r_1 = \frac{2k}{p-1}$ and thus, $r_p = \frac{2pk}{p-1}$. When plugging these values into $v_p$ we get \[v_p = \frac{1}{24}\left( \frac{-2k}{p-1} + \frac{2pk}{p-1}\right) = \frac{2k}{24} >0.\]
\item Case 2: Suppose $v_p = 0$. Then, $r_1 + pr_p = 0$, which can be rewritten to get $(1-p)r_1 = -2pk$. Therefore, $\frac{p-1}{2}|k$ since $p\not| (p-1)$ and therefore $p|r_1$. Furthermore, we get that $r_1 = \frac{2pk}{p-1}$, and thus $r_p = \frac{-2k}{p-1}$. When plugging these values into $v_1$ we get \[v_1 = \frac{1}{24}\left( \frac{2pk}{p-1} + \frac{-2k}{p-1}\right) = \frac{2k}{24} > 0.\]
\end{itemize}
In both cases, the number needed to divide $k$ is the same. Furthermore, both create a single eta-quotient for a fixed $k$. Therefore, we have that $\frac{p-1}{2} |k$. Furthermore, there are exactly two eta-quotients which result from looking at either of the orders of vanishing being zero, and they are \[ \frac{\etaq{p}{\frac{2pk}{p-1}}}{\etaq{}{\frac{2k}{p-1}}},\quad\text{and}\quad \frac{\etaq{}{\frac{2pk}{p-1}}}{\etaq{p}{\frac{2k}{p-1}}}.\]

($\leftarrow$) Suppose that $k = \frac{p-1}{2}m>0$ for some integer $m$. Also, suppose we have the two eta-quotients \[ \frac{\etaq{p}{\frac{2pk}{p-1}}}{\etaq{}{\frac{2k}{p-1}}},\quad\text{and}\quad \frac{\etaq{}{\frac{2pk}{p-1}}}{\etaq{p}{\frac{2k}{p-1}}}.\] We consider each eta-quotient as its own case. \begin{itemize}
\item Consider $\frac{\etaq{p}{\frac{2pk}{p-1}}}{\etaq{}{\frac{2k}{p-1}}}$. We note that $r_1+r_p = \frac{-2k}{p-1}+\frac{2pk}{p-1} = 2k$. Furthermore, \[r_1 +pr_p = \frac{-2k}{p-1}+\frac{2pk}{p-1}p = (p^2-1)m \equiv 0 \pmod{24}\] since $p$ is relatively prime to $24$; and \[pr_1+r_p = p\frac{-2k}{p-1}+\frac{2pk}{p-1} = 0 \equiv 0\pmod{24}.\] When looking at that orders of vanishing, we get \[v_1 = \frac{1}{24}(pr_1+r_p) = \frac{1}{24}\left(p\frac{-2k}{p-1}+\frac{2pk}{p-1}\right) = 0 \geq 0,\quad\text{and}\] \[v_p = \frac{1}{24}(r_1 +pr_p) = \frac{1}{24}\left(\frac{-2k}{p-1}+\frac{2pk}{p-1}p\right) = \frac{1}{24}(p^2-1)m\geq 0.\] Since our orders of vanishing are both $\geq 0$ and one of them is equal to $0$, we have that $\frac{\etaq{p}{\frac{2pk}{p-1}}}{\etaq{}{\frac{2k}{p-1}}} \in M_k(\Gamma_1(p))\setminus S_k(\Gamma_1(p))$.
\item Consider $\frac{\etaq{}{\frac{2pk}{p-1}}}{\etaq{p}{\frac{2k}{p-1}}}$. We note that $r_1+r_p = \frac{2pk}{p-1}+\frac{-2k}{p-1} = 2k$. Furthermore, \[r_1 +pr_p = \frac{2pk}{p-1}+p\frac{-2k}{p-1} = 0 \equiv 0 \pmod{24};\] and since $p$ is relatively prime to $24$, \[pr_1+r_p = p\frac{2pk}{p-1}+\frac{-2k}{p-1} = (p^2-1)m \equiv 0\pmod{24}.\] When looking at that orders of vanishing, we get \[v_1 = \frac{1}{24}(pr_1+r_p) = \frac{1}{24}\left(p\frac{2pk}{p-1}+\frac{-2k}{p-1}\right) = \frac{1}{24}(p^2-1)m \geq 0,\quad\text{and}\] \[v_p = \frac{1}{24}(r_1 +pr_p) = \frac{1}{24}\left(\frac{2pk}{p-1}+\frac{-2k}{p-1}p\right) = 0\geq 0.\] Since our orders of vanishing are both $\geq 0$ and one of them is equal to $0$, we have that 
$$\frac{\etaq{}{\frac{2pk}{p-1}}}{\etaq{p}{\frac{2k}{p-1}}} \in M_k(\Gamma_1(p))\setminus S_k(\Gamma_1(p)).$$
\end{itemize} Thus, $M_k(\Gamma_1(p))\setminus S_k(\Gamma_1(p))$ contains exactly two eta-quotients.
\end{proof}
From the eta-quotients given in the theorem, let $k=\frac{p-1}{2}m$ where $m$ is a positive integer. Then the eta-quotients have characters \[\chi_1(n) = \left(\frac{(-1)^{\frac{p-1}{2}m}p^{pm}}{n}\right),\quad\text{and}\quad \chi_2(n) = \left(\frac{(-1)^{\frac{p-1}{2}m}p^m}{n}\right),\] respectively. In the case where $m$ is even, both of the characters are guaranteed to be the trivial character. When $m$ is odd, we are guaranteed to have a quadratic character. 

Now that we know how many eta-quotients there are and can write down what they are if needed, we would like the dimension of the space spanned by these eta-quotients. This is provided by Theorem \ref{plinind}, which we restate here for convenience.
\begin{theorem*}
Let $p>3$ be a prime. Then, the eta-quotients in $M_k(\Gamma_1(p))$ given by the previous theorems are linearly independent.
\end{theorem*}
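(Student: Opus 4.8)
The plan is to use the elementary principle that nonzero modular forms whose $q$-expansions at one fixed cusp have pairwise distinct leading exponents must be linearly independent, and then to check that the eta-quotients produced by Theorems \ref{etaPcount} and \ref{noncuspCount} meet this hypothesis at the cusp $i\infty$.

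First I would set up the bookkeeping. For a fixed prime $p$ and fixed weight $k$, each eta-quotient on our list is of the form $f(z)=\eta^{r_1}(z)\eta^{r_p}(pz)$ with $r_p=2k-r_1$ by (\ref{wteqn}), so it is completely determined by the integer $r_1$. Using $\eta(z)=q^{1/24}\prod_{n\geq1}(1-q^n)$ one sees that the expansion of $f$ in $q=e^{2\pi iz}$ is $q^{(r_1+pr_p)/24}\prod_{n\geq1}(1-q^n)^{r_1}(1-q^{pn})^{r_p}$, whose infinite-product factor has constant term $1$; hence $f$ has leading coefficient $1$ and leading exponent $\frac{1}{24}(r_1+pr_p)$, which by (\ref{pord}) is exactly $v_p$ and, by (\ref{pordvan}), is an affine function of $r_1$ with the nonzero slope $-\frac{p-1}{24}$. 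In particular this leading exponent is an injective function of $r_1$. Now I would invoke the earlier results: the eta-quotients of Theorems \ref{etaPcount} and \ref{noncuspCount} correspond precisely to the integer points of the segment $v_1+v_p=\frac{k(p+1)}{12}$ in Figure \ref{graph} that satisfy the congruence (\ref{ordcong}), with the cusp forms occupying its interior ($v_1,v_p>0$) and the at most two non-cuspidal forms occupying its endpoints $v_p=0$ and $v_p=\frac{k(p+1)}{12}$. These are distinct points, hence distinct values of $r_1$, hence forms with pairwise distinct leading exponents at $i\infty$.

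With that in hand the conclusion is the standard leading-term argument: order the eta-quotients as $f_1,\dots,f_m$ so that the associated exponents satisfy $e_1<e_2<\cdots<e_m$; if $\sum_i c_i f_i=0$ is a nontrivial relation, let $j$ be least with $c_j\neq0$ and compare the coefficients of $q^{e_j}$ on both sides — the forms $f_i$ with $i>j$ contribute nothing there, so the coefficient of $q^{e_j}$ in $\sum_i c_i f_i$ is $c_j\neq0$, which is absurd. I do not expect a genuine obstacle: the only step demanding care is verifying that the cuspidal and non-cuspidal eta-quotients together constitute a single family indexed by distinct integer points of the segment in Figure \ref{graph}, which is precisely where one uses injectivity of $v_p$ in $r_1$ together with the explicit descriptions in Theorems \ref{etaPcount} and \ref{noncuspCount}. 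As a sanity check one may also observe that eta-quotients lying in distinct character eigenspaces of $M_k(\Gamma_1(p))=\bigoplus_{\chi}M_k(p,\chi)$ are automatically independent, so the substance of the theorem is independence within a fixed character — which the same $q$-expansion argument supplies.
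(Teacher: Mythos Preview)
Your proposal is correct and follows essentially the same line as the paper: both arguments observe that the eta-quotients on the list have pairwise distinct orders of vanishing at a fixed cusp and then deduce linear independence by the standard echelon/leading-term argument. The only cosmetic difference is that the paper phrases this via a coefficient matrix and invokes the Sturm bound to truncate the $q$-expansions, whereas you verify injectivity of the leading exponent directly from $v_p=\frac{1}{24}(r_1+p(2k-r_1))$ and argue with the minimal nonzero coefficient; neither addition changes the substance.
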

\begin{proof}
Suppose that we are looking at eta-quotients in $M_k(\Gamma_1(p))$ for $p>3$ a prime. Without loss of generality, we look at the Fourier about the cusp at $\infty$. By using the Sturm Bound \cite{Ono1}, we get that we need to compare the first $\floor{\frac{pk}{12}}+1$ terms of each Fourier series. We can pick a cusp and order the eta-quotients increasingly by looking at the order of vanishing. We can then create a matrix, $A$, where the $i,j$-th entry represents $a(j)$ in the $i$-th eta-quotient's Fourier series. Since all of the eta-quotients have different orders of vanishing and they are in increasing order, we get that $A$ is in echelon form. Furthermore, none of the rows change to non-zero  at the same time. This tells us that all the rows are linearly independent. Thus all of the eta-quotients are linearly independent.
\end{proof}
The following corollaries can all be obtained by comparing dimension formulas with our counts and applying the previous theorem.
\begin{coro}
Let $p\geq 5$ be a prime. Denote the space of level $p$, weight $k$ eta-quotients by $\eta_k(p)$.
\begin{enumerate}
\item
If $p\equiv 3\pmod{4}$, then taking the limit over odd $k$ in the appropriate congruence class from Theorem \ref{kcond} we have
$$\lim_{k\rightarrow \infty}\frac{\dim\eta_k(p)}{\dim S_k\left(p,\left(\frac{\cdot}{p}\right)\right)}=\frac{2h}{p-1}.$$
\item
If $p\equiv 3\pmod{4}$, then taking the limit over even $k$ in the appropriate congruence class from Theorem \ref{kcond} we have
$$\lim_{k\rightarrow \infty}\frac{\dim\eta_k(p)}{\dim S_k(\Gamma_0(p))}=\frac{2h}{p-1}.$$
\item
If $p\equiv 1\pmod{4}$, then taking the limit over all $k$ in the appropriate congruence class from Theorem \ref{kcond} we have
$$\lim_{k\rightarrow \infty}\frac{\dim\eta_k(p)}{\dim S_k(\Gamma_0(p))+\dim S_k\left(p,\left(\frac{\cdot}{p}\right)\right)}=\frac{h}{p-1}.$$
\end{enumerate}
\end{coro}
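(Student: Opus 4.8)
The plan is to reduce everything to leading-order asymptotics in $k$. By Theorem \ref{plinind} the eta-quotients enumerated in Theorems \ref{etaPcount} and \ref{noncuspCount} are linearly independent, so $\dim\eta_k(p)$ equals their total number. In each of the three cases of Theorem \ref{etaPcount} this count is
\[
\frac{k(p+1)}{12\frac{p-1}{2h}} + O(1) = \frac{k(p+1)}{12}\cdot\frac{2h}{p-1} + O(1),
\]
where the bounded error absorbs the $\floor{\cdot}$ versus $\ceiling{\cdot}$ discrepancy, the $-1$ appearing in case (1), and the at most two non-cuspidal eta-quotients supplied by Theorem \ref{noncuspCount}. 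Hence $\dim\eta_k(p) = \frac{k(p+1)}{12}\cdot\frac{2h}{p-1} + O(1)$, and the whole corollary becomes a comparison of leading coefficients.

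First I would record the leading asymptotics of the denominators from the dimension formulas recalled in Section 2. For even $k$ the formula for $\dim S_k(\Gamma_0(p))$ together with $g_0(p)=\frac{p+1}{12}-\frac{\mu_{0,2}(p)}{4}-\frac{\mu_{0,3}(p)}{3}$ shows that the coefficient of $k$ is
\[
(g_0(p)-1) + 1 + \frac{\mu_{0,2}(p)}{4} + \frac{\mu_{0,3}(p)}{3} = \frac{p+1}{12},
\]
so $\dim S_k(\Gamma_0(p)) = \frac{k(p+1)}{12} + O(1)$. The same holds for the quadratic-character space: reading off the formula compiled in Table \ref{quadtab}, the genus and cusp data for $\Gamma_0(p)$ are unchanged, the $\chi$-dependent elliptic contributions (governed by $\sum_{x\in A_4(p)}\chi(x)$ and $\sum_{x\in A_3(p)}\chi(x)$) enter only through the $O(1)$ part, and the coefficient of $k$ again collapses to $\frac{p+1}{12}$, giving $\dim S_k\!\left(p,\left(\frac{\cdot}{p}\right)\right) = \frac{k(p+1)}{12} + O(1)$.

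With these asymptotics each limit is a ratio of leading coefficients, the $O(1)$ corrections being negligible against the linear growth. In cases (1) and (2) the denominator is a single space with coefficient $\frac{p+1}{12}$, so
\[
\lim_{k\to\infty}\frac{\dim\eta_k(p)}{\dim S_k} = \frac{\frac{p+1}{12}\cdot\frac{2h}{p-1}}{\frac{p+1}{12}} = \frac{2h}{p-1};
\]
the restriction to odd (respectively even) $k$ when $p\equiv 3\pmod 4$ is exactly what makes the comparison space nonzero, since $\left(\frac{\cdot}{p}\right)$ is odd there while the trivial character forces even weight. In case (3), $p\equiv 1\pmod 4$ forces even $k$ and the eta-quotients distribute between the trivial- and quadratic-character spaces, so the denominator is $\dim S_k(\Gamma_0(p)) + \dim S_k\!\left(p,\left(\frac{\cdot}{p}\right)\right)$ with combined coefficient $\frac{p+1}{6}$, and the ratio halves to $\frac{h}{p-1}$.

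I expect the main obstacle to be the second step for the quadratic character: one must confirm that the character-dependent terms at the elliptic points perturb only the $O(1)$ part and that the coefficient of $k$ is genuinely $\frac{p+1}{12}$, independent of $\chi$. A secondary point worth stating carefully is that the limits run along the congruence classes of $k$ prescribed by Theorem \ref{kcond}, which simultaneously guarantees that $\dim\eta_k(p)$ grows (so each ratio is well defined) and that the parity relation $\chi(-1)=(-1)^k$ holds, ensuring the denominator is the correct nonzero space in each case.
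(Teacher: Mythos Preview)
Your proposal is correct and follows exactly the approach the paper indicates: the paper gives no explicit proof beyond the sentence ``The following corollaries can all be obtained by comparing dimension formulas with our counts and applying the previous theorem,'' and your argument carries out precisely that comparison, extracting the $\frac{k(p+1)}{12}$ leading term from both the eta-quotient count of Theorem~\ref{etaPcount} and the dimension tables of Section~2. Your write-up in fact supplies more detail than the paper does.
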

Finally, we would like to consider the case that our $v_1$ and $v_p$ are integral but do not correspond to integral $r_1,r_p$. To gain some intuition concerning the properties of the ``eta-quotients'' formed from these $r_1,r_p$ we consider the following example.
\begin{example}
Let $p=11$ and $k=6$. Note, in this situation we have that in order to have eta-quotients we must have $v_1\equiv 3\pmod{5}$. So, we will investigate the properties of the function obtained by choosing $v_1\not\equiv 3\pmod{5}$.

Consider $v_1=1$. This implies $v_p=5$.
Then,
$$\begin{pmatrix}r_1\\r_p\end{pmatrix}=\begin{pmatrix}1&11\\11&1\end{pmatrix}^{-1}\begin{pmatrix}24\\120\end{pmatrix}=\begin{pmatrix}54/5\\6/5\end{pmatrix}.$$
Now, we can use these to form the ``eta-quotient''
$$f(z)=\eta^{\frac{54}{5}}(z)\eta^{\frac{6}{5}}(11z).$$
Using the transformation properties from Section \ref{EtaQuotient} we have that
\begin{align*}
f(Tz)&=e^{\frac{27\pi i}{30}}\eta^{\frac{54}{5}}(z)e^{\frac{11\pi i }{10}}\eta^{\frac{6}{5}}(11z)\\
&=f(z).
\end{align*}
Note, if we raise $f(z)$ to the $5^{th}$ power to cancel the denominators of the $r_1$ and $r_p$ then we can use Theorem \ref{etacon} to verify that we obtain $f(z)^5\in S_{30}(\Gamma_0(11))$, i.e., our lattice point corresponds to a ``root'' of an $\eta$ quotient of higher weight.

Note, the remaining choices for $v_1$ give us similar results.
\end{example}

\section{Conclusion and Further Questions}
The paper details the number of eta-quotients in $M_k(\Gamma_0(p))$ of the form $\eta^{r_1}(z)\eta^{r_p}(pz)$. It is strongly suspected that these are not all the eta-quotients, but at the moment it is unclear what these other eta-quotients look like. Further work in this project would involve generalizing these results for all levels as well as figuring out the other forms of eta-quotients that would be possible.

\section*{Acknowledgments}
The first author would like to thank Holly Swisher, for aid with editing and organization, and Dania Zantout, for help with understanding of background material.
\section*{Appendix: Tables}

\begin{center}
\begin{longtable}{|c|*{4}{c|}}\hline
\multicolumn{5}{|c|}{$\dim S_k(\Gamma_0(p))$} \\
\hline
\backslashbox{$k(12)$}{$p(12)$}&1&5&7&11\\
\hline
0&$\frac{(p+1)(k-1)+2}{12}$&$\frac{(p+1)(k-1)-6}{12}$&$\frac{(p+1)(k-1)-4}{12}$&$\frac{(p+1)(k-1)-12}{12}$\\
\hline
1&0&0&0&0\\
\hline
2&$\frac{(p+1)(k-1)-26}{12}$&$\frac{(p+1)(k-1)-18}{12}$&$\frac{(p+1)(k-1)-20}{12}$&$\frac{(p+1)(k-1)-12}{12}$\\
\hline
3&0&0&0&0\\
\hline
4&$\frac{(p+1)(k-1)-6}{12}$&$\frac{(p+1)(k-1)-6}{12}$&$\frac{(p+1)(k-1)-12}{12}$&$\frac{(p+1)(k-1)-12}{12}$\\
\hline
5&0&0&0&0\\
\hline
6&$\frac{(p+1)(k-1)-10}{12}$&$\frac{(p+1)(k-1)-18}{12}$&$\frac{(p+1)(k-1)-4}{12}$&$\frac{(p+1)(k-1)-12}{12}$\\
\hline
7&0&0&0&0\\
\hline
8&$\frac{(p+1)(k-1)-14}{12}$&$\frac{(p+1)(k-1)-6}{12}$&$\frac{(p+1)(k-1)-20}{12}$&$\frac{(p+1)(k-1)-12}{12}$\\
\hline
9&0&0&0&0\\
\hline
10&$\frac{(p+1)(k-1)-18}{12}$&$\frac{(p+1)(k-1)-18}{12}$&$\frac{(p+1)(k-1)-12}{12}$&$\frac{(p+1)(k-1)-12}{12}$\\
\hline
11&0&0&0&0\\
\hline
\caption{The dimension of $S_k(\Gamma_0(p))$ with trivial character and $k>2$.}\label{trivtab}
\end{longtable}
\end{center}

\begin{table}[h]
\begin{center}
\begin{tabular}{|c|*{4}{c|}}\hline
\multicolumn{5}{|c|}{$\dim S_k(p,\left(\frac{\cdot}{p}\right))$} \\
\hline
\backslashbox{$k(12)$}{$p(24)$}&1&5&7&11\\
\hline
0&$\frac{(k-1)(p+1)+8}{12}$&$\frac{(k-1)(p+1)-12}{12}$&0&0\\
\hline
1&0&0&$\frac{(k-1)(p+1)}{12}$&$\frac{(k-1)(p+1)-6}{12}$\\
\hline
2&$\frac{(k-1)(p+1)-20}{12}$&$\frac{(k-1)(p+1)}{12}$&0&0\\
\hline
3&0&0&$\frac{(k-1)(p+1)+2}{12}$&$\frac{(k-1)(p+1)-6}{12}$\\
\hline
4&$\frac{(k-1)(p+1)}{12}$&$\frac{(k-1)(p+1)-12}{12}$&0&0\\
\hline
5&0&0&$\frac{(k-1)(p+1)-14}{12}$&$\frac{(k-1)(p+1)-6}{12}$\\
\hline
6&$\frac{(k-1)(p+1)-4}{12}$&$\frac{(k-1)(p+1)}{12}$&0&0\\
\hline
7&0&0&$\frac{(k-1)(p+1)}{12}$&$\frac{(k-1)(p+1)-6}{12}$\\
\hline
8&$\frac{(k-1)(p+1)-4}{12}$&$\frac{(k-1)(p+1)-12}{12}$&0&0\\
\hline
9&0&0&$\frac{(k-1)(p+1)+2}{12}$&$\frac{(k-1)(p+1)-6}{12}$\\
\hline
10&$\frac{(k-1)(p+1)-12}{12}$&$\frac{(k-1)(p+1)}{12}$&0&0\\
\hline
11&0&0&$\frac{(k-1)(p+1)-14}{12}$&$\frac{(k-1)(p+1)-6}{12}$\\
\hline
\end{tabular}

\begin{tabular}{|c|*{4}{c|}}\hline
\multicolumn{5}{|c|}{$\dim S_k(p,\left(\frac{\cdot}{p}\right))$} \\
\hline
\backslashbox{$k(12)$}{$p(24)$}&13&17&19&23\\
\hline
0&$\frac{(k-1)(p+1)-4}{12}$&$\frac{(k-1)(p+1)}{12}$&0&0\\
\hline
1&0&0&$\frac{(k-1)(p+1)}{12}$&$\frac{(k-1)(p+1)-6}{12}$\\
\hline
2&$\frac{(k-1)(p+1)-8}{12}$&$\frac{(k-1)(p+1)-12}{12}$&0&0\\
\hline
3&0&0&$\frac{(k-1)(p+1)+2}{12}$&$\frac{(k-1)(p+1)-6}{12}$\\
\hline
4&$\frac{(k-1)(p+1)-12}{12}$&$\frac{(k-1)(p+1)}{12}$&0&0\\
\hline
5&0&0&$\frac{(k-1)(p+1)-14}{12}$&$\frac{(k-1)(p+1)-6}{12}$\\
\hline
6&$\frac{(k-1)(p+1)+8}{12}$&$\frac{(k-1)(p+1)-12}{12}$&0&0\\
\hline
7&0&0&$\frac{(k-1)(p+1)}{12}$&$\frac{(k-1)(p+1)-6}{12}$\\
\hline
8&$\frac{(k-1)(p+1)-20}{12}$&$\frac{(k-1)(p+1)}{12}$&0&0\\
\hline
9&0&0&$\frac{(k-1)(p+1)+2}{12}$&$\frac{(k-1)(p+1)-6}{12}$\\
\hline
10&$\frac{(k-1)(p+1)}{12}$&$\frac{(k-1)(p+1)-12}{12}$&0&0\\
\hline
11&0&0&$\frac{(k-1)(p+1)-14}{12}$&$\frac{(k-1)(p+1)-6}{12}$\\
\hline
\end{tabular}
\caption{Dimension of $S_k(p, \left(\frac{\cdot}{p}\right))$.}\label{quadtab}
\end{center}
\end{table}
\bigskip\bigskip

\bibliographystyle{alpha}

\begin{thebibliography}{Ono04}

\bibitem[Kob93]{Kob1}
Neal Koblitz.
\newblock {\em Introduction to elliptic curves and modular forms}, volume~97 of
  {\em Graduate Texts in Mathematics}.
\newblock Springer-Verlag, New York, second edition, 1993.

\bibitem[K{\"o}h11]{Kohler}
G{\"u}nter K{\"o}hler.
\newblock {\em Eta products and theta series identities}.
\newblock Springer monographs in mathematics. Springer-Verlag, New York, 2011.

\bibitem[Ono04]{Ono1}
Ken Ono.
\newblock {\em The web of modularity: arithmetic of the coefficients of modular
  forms and {$q$}-series}, volume 102 of {\em CBMS Regional Conference Series
  in Mathematics}.
\newblock Published for the Conference Board of the Mathematical Sciences,
  Washington, DC, 2004.

\bibitem[RW15]{Rou}
Jeremy Rouse and John~J. Webb.
\newblock On spaces of modular forms spanned by eta-quotients.
\newblock {\em Adv. Math.}, 272:200--224, 2015.

\bibitem[Ste07]{Ste1}
William Stein.
\newblock {\em Modular forms, a computational approach}, volume~79 of {\em
  Graduate Studies in Mathematics}.
\newblock American Mathematical Society, Providence, RI, 2007.
\newblock With an appendix by Paul E. Gunnells.

\end{thebibliography}

\end{document}